\newtheorem{theorem}{Theorem}[section]
\newtheorem{definition}{Definition}[section]
\newtheorem{claim}{Claim}[section]
\newtheorem{case}{Case}
\def\emptyset{\mbox{{\rm \O}}}
\def\bar{\overline}
\begin{document}
	
	\title{A Fan-type condition involving bipartite independence number for hamiltonicity in graphs\thanks{This work is supported by National Natural Science Foundation of China (No. 12271169 and 12331014) and Science and Technology Commission of Shanghai Municipality (No. 22DZ2229014) }}
\author{Hongxi Liu$^{1,}$\footnote{Email: hongxiliu1@163.com.}, \;Long-Tu Yuan$^{1,}$\footnote{Email: ltyuan@math.ecnu.edu.cn.}, \;  Xiaowen Zhang$^{1,}$\footnote{Corresponding author. Email: xiaowzhang0128@126.com.}\\
	\small $^1$School of Mathematical Sciences, Shanghai Key Laboratory of PMMP\\
	\small East China Normal University, Shanghai, 200241, China \\ 
}
\date{}

\maketitle

\begin{abstract}
The bipartite independence number of a graph $G$, denoted by $\widetilde{\alpha}(G)$, is defined as the smallest integer $q$ for which there exist positive integers $s$ and $t$ with $s + t = q + 1$, such that for any two disjoint subsets $A, B \subseteq V(G)$ with $|A| = s$ and $|B| = t$, there exists an edge between $A$ and $B$. In this paper, we prove that for a 2-connected graph $G$ of order at least three, if $\max\{d_G(x), d_G(y)\} \ge \widetilde{\alpha}(G)$ for every pair of nonadjacent vertices $x, y$ at distance two, then $G$ is hamiltonian. Moreover, we prove that if $G$ is 3-connected and $\max\{d_G(x), d_G(y)\} \ge \widetilde{\alpha}(G)+1$ for every pair of nonadjacent vertices $x, y$ at distance two, then $G$ is hamiltonian-connected. Our results generalize the recent work by Li and Liu.
	\begin{flushleft}
		{\em Key words and phrases:} Hamiltonian; hamiltonian-connected; Fan-type; bipartite independence number\\
		{\em AMS 2000 Subject Classifications:}  05C45, 05C38\\
	\end{flushleft}
	
\end{abstract}

\section{Introduction}

We consider finite simple graphs. For any undefined terminology or notation, we refer to the books \cite{Bondy2008, West1996}.
Let $V(G)$ and $E(G)$ denote the vertex set and edge set of a graph $G$, respectively. The \emph{order} and \emph{size} of $G$ are denoted by $|G|$ and $e(G)$, respectively. Let $\delta(G)$ denote the minimum degree of a graph $G$.
Denote by $\deg_G(v)$ the minimum degree of $v$ in a graph $G$.
Denote by $K_n$ a complete graph of order $n$. $G$ denotes the complement of a graph $G$.
For two graphs $G$ and $H$, $G\vee H$ denotes the \emph{join} of $G$ and $H$, which is obtained from the disjoint
union of $G$ and $H$ by adding edges joining every vertex of $G$ to every vertex of $H$.
A Hamilton  cycle in $G$ is a cycle containing every vertex of $G$. A graph $G$ is hamiltonian if it contains a Hamilton cycle.

Dirac's classic theorem \cite{Dirac1952} from 1952 states that if $G$ is a graph of order at least three and $\delta(G)\geq n/2$, then $G$ is hamiltonian, which is probably the first nontrivial sufficient condition for a graph to be hamiltonian.
Ore's theorem \cite{Ore1960} from 1960 says that if $G$ is a graph of order at least three and $d_G(x)+d_G(y)\geq n$ for each pair of nonadjacent vertices $x, y\in V(G)$, then $G$ is hamiltonian, which generalized Dirac's theorem.
In 1984, Fan \cite{Fan1984} gave another sufficient condition for a graph to be hamiltonian.

\begin{theorem}[Fan \cite{Fan1984}]
	If $G$ is 2-connected and $\max\{d_G(x),d_G(y)\}\geq n/2$ for each pair of vertices $x,y$ with $d_G(x,y)=2$, then $G$ is hamiltonian.
\end{theorem}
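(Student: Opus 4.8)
The plan is to argue by contradiction through a longest‑cycle analysis in the spirit of Ore and Bondy. Suppose $G$ is $2$‑connected on $n\ge 3$ vertices, satisfies the displayed Fan‑type condition, and is not hamiltonian. Let $C$ be a longest cycle of $G$; then $3\le |V(C)|<n$. Fix an orientation $\vec{C}$ and write $x^{+}$, $x^{-}$ for the successor and predecessor of $x\in V(C)$ along $\vec{C}$. Since $V(G)\setminus V(C)$ is nonempty, some component $H$ of $G-V(C)$ exists, and $2$‑connectivity forces $H$ to have at least two neighbours on $C$; concatenating two internally disjoint paths from a vertex of $H$ to $V(C)$ produces a path $P=u_{0}u_{1}\cdots u_{l}$ with $u_{0},u_{l}\in V(C)$, with $u_{1},\dots,u_{l-1}\in V(H)$, and with $l\ge 2$.

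Among all triples (longest cycle $C$, component $H$ of $G-V(C)$, path $P$ as above) I would fix one minimising the length $\ell_{1}$ of the shorter of the two $u_{0}$--$u_{l}$ arcs of $C$, and, subject to that, maximising $l$. Write $C_{1}$ for that shorter arc and $C_{2}$ for the other, with $\ell_{2}=|C_{2}|\ge \ell_{1}$. I would then record: (a) replacing $C_{2}$ by $P$ must not create a longer cycle, so $l\le \ell_{1}$, whence $\ell_{1}\ge 2$ and $u_{0}^{+}\ne u_{l}$; (b) no two consecutive vertices of $C$ both have a neighbour in $H$, for otherwise that edge could be replaced by a path through $H$ to lengthen $C$; in particular $u_{0}^{\pm}$ and $u_{l}^{\pm}$ have no neighbour in $H$; (c) no interior vertex of $C_{1}$ has a neighbour in $H$, since such a vertex would be joined to $u_{0}$ by a path through $H$ spanning an arc shorter than $C_{1}$, contradicting minimality of $\ell_{1}$; (d) applying minimality of $\ell_{1}$ to the length‑$2$ detour $u_{0}u_{1}z$ shows every neighbour $z\ne u_{0}$ of $u_{1}$ on $C$ lies at cyclic distance at least $\ell_{1}$ from $u_{0}$, so counting the forbidden ball around $u_{0}$ and using $|V(C)|+|V(H)|\le n$ yields $d_{G}(u_{1})\le n-2\ell_{1}+1$, and symmetrically $d_{G}(u_{l-1})\le n-2\ell_{1}+1$.

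Next I would feed the hypothesis with carefully chosen distance‑two pairs. By (b) and (c), $u_{0}^{+}\not\sim u_{1}$ while $u_{0}\sim u_{0}^{+}$ and $u_{0}\sim u_{1}$, so $d_{G}(u_{0}^{+},u_{1})=2$; similarly $d_{G}(u_{0}^{-},u_{1})=2$, $d_{G}(u_{l}^{+},u_{l-1})=2$ and $d_{G}(u_{l}^{-},u_{l-1})=2$. Each application of the hypothesis therefore produces a ``heavy'' vertex of degree at least $n/2$: by (d), if $\ell_{1}$ is large then $d_{G}(u_{1})<n/2$, forcing $u_{0}^{+}$ and $u_{0}^{-}$ (and likewise $u_{l}^{\pm}$) to be heavy, while if $\ell_{1}$ is small then the relevant interior vertex of $H$ may be the heavy one. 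I would also record, for the endgame, the classical fact that if $h\in V(H)$ is heavy then the successors of $N_{C}(h)$ are pairwise nonadjacent and nonadjacent to $h$ (each such adjacency produces a longer cycle), so $N_{C}(h)$ and its successor set are disjoint subsets of $V(C)$ and hence $|N_{H}(h)|\ge d_{G}(h)-\tfrac{1}{2}|V(C)|$.

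Finally I would derive the contradiction. If some $h\in V(H)$ is heavy and $|V(H)|=1$, then $d_{G}(h)=|N_{C}(h)|\le\tfrac{1}{2}|V(C)|<n/2$, already impossible; for $|V(H)|\ge 2$ one combines $|N_{H}(h)|\ge d_{G}(h)-\tfrac{1}{2}|V(C)|$ with an insertion argument that re‑routes the path through $H$ into $C$ along a chord supplied by the large degree, again beating $C$. If instead $u_{0}^{\pm}$ and $u_{l}^{\pm}$ are all heavy, I would run a rotation at the ``corners'' $u_{0}$ and $u_{l}$: cut the edges $u_{0}^{-}u_{0}$, $u_{0}u_{0}^{+}$ (and symmetrically at $u_{l}$), use the at‑least‑$n/2$ neighbourhoods of $u_{0}^{\pm},u_{l}^{\pm}$ to locate a chord which, together with $C_{1}$ and the detour $P$, can be assembled into a cycle longer than $C$, the inequality $|V(C)|+|V(H)|\le n$ guaranteeing the overlap needed to gain length. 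I expect this last step — turning the degree information into an explicit longer cycle, with its several subcases and in particular the case of a component $H$ with more than one vertex — to be the main obstacle; facts (a)--(d) and the extraction of the distance‑two pairs are routine.
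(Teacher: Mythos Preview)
The paper does not contain a proof of this statement. Fan's theorem appears only as a cited background result (Theorem~1.1, attributed to \cite{Fan1984}); the paper's own contributions are Theorems~\ref{hamiltonian theorem} and~\ref{hamiltonian-connected theorem}, which replace $n/2$ by the bipartite independence number $\widetilde{\alpha}(G)$, and the proofs given in Sections~\ref{sec2} and~\ref{sec3} are for those results, not for Fan's theorem. Moreover, Theorem~\ref{hamiltonian theorem} does not formally imply Fan's theorem (one does not have $\widetilde{\alpha}(G)\le n/2$ in general), so the paper's proof of Theorem~\ref{hamiltonian theorem} cannot be read as a proof of the statement you were asked about. There is therefore nothing in the paper to compare your proposal against.

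Regarding the proposal on its own terms: the setup and facts (a)--(d) are the standard opening moves for longest-cycle arguments and are fine, and you correctly identify the distance-two pairs to which the hypothesis applies. However, you explicitly leave the decisive step --- converting ``$u_0^{\pm}$ and $u_l^{\pm}$ are heavy'' (or ``some $h\in V(H)$ is heavy with $|V(H)|\ge 2$'') into an actual cycle longer than $C$ --- as an expectation rather than an argument. That step is precisely where Fan's proof does its real work, requiring a careful case analysis of how the neighbourhoods of the heavy vertices interact along $C$; a generic ``rotation at the corners plus a chord'' does not by itself guarantee a longer cycle, and the phrase ``the inequality $|V(C)|+|V(H)|\le n$ guaranteeing the overlap needed to gain length'' is not a proof. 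As written, the proposal is an outline with the hard part missing.
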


Fan's result
is a significant improvement on Ore's theorem, and the degree condition stated there is called the Fan condition. For more information on some of these generalizations, we refer the reader to  \cite{Dirac1952,Fan1984,Faudree1989,Gould2014,Li2013,Li2025,Ore1960}.
Motivated by Dirac's theorem, the following notion of bipartite independence number was introduced by
McDiarmid and Yolov \cite{Mcdiarmid2017} in 2017.
\begin{definition}
	The bipartite independence number of a graph $G$, denoted by $\widetilde{\alpha}(G)$, is defined as the smallest integer $q$ for which there exist positive integers $s$ and $t$ with $s + t = q + 1$, such that for any two disjoint subsets $A, B \subseteq V(G)$ with $|A| = s$ and $|B| = t$, there exists an edge between $A$ and $B$.
\end{definition}

By considering the minimum degree and the bipartite independence number, McDiarmid and Yolov \cite{Mcdiarmid2017} proved that if $G$ is a graph of order at least three and $\delta(G)\ge \widetilde{\alpha}(G)$, then $G$ is hamiltonian.
Since for a graph $G$ with $\delta(G)\geq n/2$, and for any two disjoint subsets $A, B \subseteq V(G)$ with $|A| = 1$ and $|B| = \lfloor n/2\rfloor$, there exists an edge between $A$ and $B$. The condition in the result of McDiarmid and Yolov is weaker than that in Dirac's theorem.

A graph is called  hamiltonian-connected if between any
two distinct vertices there is a Hamilton path.
By considering degree sum conditions for any graph to be hamiltonian-connected, Ore \cite{Ore1963} proved that
if $G$ is  a graph of order at least three and $d_G(x)+d_G(y)\geq n$ for each pair of nonadjacent vertices $x, y\in V(G)$, then $G$ is hamiltonian-connected.
In 2024, based on the minimum degree and the bipartite independence number, Zhou, Broersma, Wang and Lu \cite{Zhou2024} proved that
if $G$ is a graph of order at least three and with $\delta(G)\ge \widetilde{\alpha}(G)+1$, then $G$ is hamiltonian-connected.

More recently, by combining degree sum conditions and the bipartite independence number, Li and Liu \cite {Li2025} gave sufficient conditions for hamiltonicity and hamiltonian connectedness.

\begin{theorem}[Li and Liu \cite {Li2025}]
	Let $G$ be a 2-connected graph of order at least three. If $d_G(x)+d_G(y) \ge 2\widetilde{\alpha}(G)$ for each pair of nonadjacent vertices $x, y\in V(G)$, then $G$ is hamiltonian.
\end{theorem}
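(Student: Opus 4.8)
The plan is a proof by contradiction. Suppose $G$ is $2$-connected, $|G|=n\ge 3$, $d_G(x)+d_G(y)\ge 2\widetilde{\alpha}(G)$ for every pair of nonadjacent vertices $x,y$, but $G$ is not hamiltonian; put $q=\widetilde{\alpha}(G)$ and fix positive integers $s,t$ with $s+t=q+1$ realizing the definition of $\widetilde{\alpha}$ --- so every disjoint pair consisting of an $s$-set and a $t$-set spans an edge, and $s,t\le q$. It then suffices to exhibit \emph{either} an independent set of size at least $q+1$, \emph{or} two disjoint sets $A_0,B_0\subseteq V(G)$ with no edge between them and $\min\{|A_0|,|B_0|\}\ge q$: either one contradicts the choice of $(s,t)$ (take $A\subseteq A_0$, $B\subseteq B_0$, or $A,B$ inside the independent set, of sizes $s,t$).

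First I would set up a longest cycle $C$; since $G$ is not hamiltonian, $R:=V(G)\setminus V(C)\ne\emptyset$. Orient $C$ and write $v^{+},v^{-}$ for the successor and predecessor of $v$ on $C$. Let $H$ be a component of $G-V(C)$; $2$-connectedness gives $N_C(H)=\{u_1,\dots,u_k\}$ with $k\ge 2$. The standard extremal facts for a longest cycle are: (i) no two $u_i$ are consecutive on $C$ (an $H$-path between two consecutive ones would lengthen $C$); (ii) $u_i^{+}\not\sim u_j^{+}$ for $i\ne j$ (an $H$-path from $u_i$ to $u_j$ together with the chord $u_i^{+}u_j^{+}$ and the appropriate arcs of $C$ lengthens $C$); and (iii), a consequence of (i), every vertex of $H$ is nonadjacent to every $u_i^{+}$. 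Hence, fixing any $x\in H$, the set $I_0:=\{x,u_1^{+},\dots,u_k^{+}\}$ is independent with $|I_0|=k+1$.

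Next I would feed the nonadjacent pairs into the hypothesis: $d_G(x)+d_G(u_i^{+})\ge 2q$ for each $i$, and $d_G(u_i^{+})+d_G(u_j^{+})\ge 2q$ for $i\ne j$. If $k\ge q$ we are done via $I_0$, so assume $k\le q-1$; then $d_G(x)<q$ while every $d_G(u_i^{+})\ge 2q-k>q$. Thus we are in the regime where $x$ --- indeed every nonneighbour of $x$ --- has small degree, whereas all the successor vertices $u_i^{+}$ have large degree. From here I would build $A_0$ and $B_0$: let $A_0$ gather low-degree vertices (beginning with $x$ and the vertices lying across $H$), let $B_0$ be confined to the common nonneighbourhood of $A_0$, and then enlarge both by iterating along $C$ an exchange step that trades an already-used attachment point $u_i$, together with the short arc it controls, for fresh vertices, all the while keeping $A_0$ and $B_0$ edge-free between them, until $\min\{|A_0|,|B_0|\}\ge q$. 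Two facts make this feasible: any longest \emph{path} of $G$ yields two disjoint index sets from its two endpoints, forcing $2q\le n-1$ (so $q\le(n-1)/2$) and thereby room to maneuver; and the large degrees of the $u_i^{+}$ tightly constrain how they sit on $C$, which is what pins down the low-degree part. The near-extremal case, where $n$ is close to $2q$ (so $C$ is almost spanning and $R$ is tiny), I would dispose of separately by a short Bondy--Chv\'atal/Dirac-type argument --- in that range more than $n/2$ vertices turn out to have degree exceeding $n/2$ --- and a further routine case handles $|H|>1$.

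I expect this construction to be the main obstacle: turning the purely \emph{local} nonadjacencies furnished by the longest cycle into a bipartite hole with \emph{both} sides of size at least $q$ is exactly where the bipartite-independence hypothesis must be used globally, and it demands a careful trade-off between ``few but well-separated low-degree vertices'' and ``many high-degree vertices that consume long arcs of $C$''. Should the direct construction prove unwieldy, the natural fallback is to establish a Bondy--Chv\'atal-type closure for the condition $d_G(u)+d_G(v)\ge 2\widetilde{\alpha}(G)$ (for nonadjacent $u,v$) and run the argument inside the denser closed graph.
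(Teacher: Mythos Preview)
The paper does not prove this statement directly: it is quoted as a prior result of Li and Liu, and the paper's own contribution is the stronger Fan-type Theorem~\ref{hamiltonian theorem}, whose hypothesis $\max\{d_G(x),d_G(y)\}\ge\widetilde{\alpha}(G)$ for distance-two pairs is implied by the Ore-type hypothesis here. So the paper's implicit proof of this statement is ``apply Theorem~\ref{hamiltonian theorem},'' and the meaningful comparison is between your plan and the proof of Theorem~\ref{hamiltonian theorem}.

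That proof works on a longest \emph{path} $P=v_1\cdots v_m$ (chosen to maximise $d(v_1)+d(v_m)$), not a longest cycle. After showing $d(v_1)\ge\widetilde{\alpha}(G)$ (Claim~\ref{vd}), it fixes the witnessing pair $(s,t)$, takes the smallest index $k$ with $|N(v_1)\cap\{v_2,\dots,v_k\}|=s$, and splits $N(v_1)$ and $N(v_m)$ into ``left'' and ``right'' pieces at $k$. Two short cycle-closing arguments show that the shifted pairs $(S_1^-,T_1^+)$ and $(S_2^+\cup\{v_1\},T_2^+)$ each span no edge; the definition of $\widetilde{\alpha}$ then bounds their sizes and forces $d(v_1)\le\widetilde{\alpha}(G)-1$, a contradiction. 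The bipartite hole is manufactured \emph{directly} out of predecessor/successor sets on the path --- no iteration, no growing procedure, no side cases.

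Your plan has a genuine gap precisely where you flag the ``main obstacle.'' The longest-cycle setup yields only the independent set $\{x,u_1^+,\dots,u_k^+\}$ of size $k+1$; when $k<q$ you propose to grow edge-free sets $A_0,B_0$ to size $\ge q$ by an unspecified ``exchange step'' along $C$. Nothing in the proposal says why such a step exists, why it terminates at the right size, or why the near-extremal case reduces to a ``short Bondy--Chv\'atal/Dirac-type argument.'' The deduction ``$k\le q-1\Rightarrow d_G(x)<q$'' is also only valid when $|H|=1$; calling the case $|H|>1$ ``routine'' is optimistic, since a large $H$ destroys the inequality $d_G(u_i^+)\ge 2q-k$ that you rely on. If you want a direct argument, the longest-path setup is the right framework: the predecessor/successor shift naturally produces \emph{two} disjoint edge-free sets rather than one independent set, and cutting at the index where the first $s$ neighbours of $v_1$ accumulate is exactly what aligns the construction with the $(s,t)$-structure of $\widetilde{\alpha}$.
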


\begin{theorem}[Li and Liu \cite {Li2025}]
	Let $G$ is a 3-connected graph of order at least three. If $d_G(x)+d_G(y) \ge 2\widetilde{\alpha}(G)+1$ for each pair of nonadjacent vertices $x, y\in V(G)$, then $G$ is hamiltonian-connected.
\end{theorem}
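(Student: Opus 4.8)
The plan is to argue by contradiction using the rotation--extension method, adapting the argument behind the hamiltonicity companion (the $2$-connected, $2\widetilde{\alpha}(G)$ statement above) and the minimum-degree version $\delta(G)\ge\widetilde{\alpha}(G)+1$ of Zhou, Broersma, Wang and Lu \cite{Zhou2024}. Write $\beta:=\widetilde{\alpha}(G)$. Suppose $G$ is not hamiltonian-connected and fix $u,v\in V(G)$ admitting no Hamilton $u$--$v$ path. Among all $u$--$v$ paths take a longest one $P=v_{0}v_{1}\cdots v_{\ell}$ with $v_{0}=u$, $v_{\ell}=v$; since $P$ is not spanning, $R:=V(G)\setminus V(P)\ne\emptyset$. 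Unlike in the hamiltonicity setting, $u$ and $v$ may have neighbours outside $P$, so I would keep in reserve the freedom to replace $P$ by another longest $u$--$v$ path produced by a rotation at an endpoint; controlling this end behaviour is exactly what the two strengthenings over the hamiltonicity statement (degree ``$+1$'' and connectivity $3$ in place of $2$) should buy.

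The first block of work is a set of crossing lemmas attached to a fixed component $H$ of $G[R]$. By $3$-connectivity $H$ has at least three neighbours on $P$; list them $v_{i_{1}},\dots,v_{i_{r}}$ with $i_{1}<\cdots<i_{r}$ and $r\ge 3$, and note $u,v\notin N_{G}(H)$. Detour/extension arguments then give: (i) no two of the $v_{i_{j}}$ are consecutive on $P$; (ii) the successor set $S:=\{v_{i_{1}+1},\dots,v_{i_{r}+1}\}$ has $|S|=r$, is independent, and no vertex of $S$ has a neighbour in $V(H)$ --- for each of these, an edge of the forbidden type together with a path through $H$ joining two attachments yields a $u$--$v$ path longer than $P$; and (iii) the analogous statements for the predecessors $\{v_{i_{j}-1}\}$, and, after rotating at $u$ and $v$, that $u$ and $v$ send no edge to (most of) $S$. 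Since $H$ is a component of $G[R]$, I also record $N_{G}(h)\subseteq V(H)\cup\{v_{i_{1}},\dots,v_{i_{r}}\}$, hence $d_{G}(h)\le|V(H)|-1+r$, for every $h\in V(H)$.

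The heart of the proof is to extract from this structure a contradiction with one of the two hypotheses. By (i)--(ii), any vertex outside $V(H)$ that is not an attachment has no neighbour in $V(H)$, so $\bigl(V(G)\setminus(V(H)\cup N_{G}(H)),\,V(H)\bigr)$ is a disjoint pair spanning no edge; taking sub-pairs, $G$ has a disjoint edgeless pair of every size $(s,t)$ with $1\le s\le|V(G)|-|V(H)|-r$ and $1\le t\le|V(H)|$. If these already realise every split summing to $\beta+1$, we contradict the minimality in the definition of $\beta$; so we may assume $|V(H)|\le\beta-1$ (the case of a very large $H$, which forces $P$ short, is handled symmetrically). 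Then $d_{G}(h)\le\beta+r-2$, so by the hypothesis applied to the nonadjacent pair $\{a,h\}$ with $a\in S$ we get $d_{G}(a)\ge\beta-r+3$; plugging this together with the independence of $S$, facts (iii), and a summation of the non-adjacencies forced along the segments of $P$ between consecutive attachments (and, if needed, running the same bookkeeping over the successors of all $r$ attachments and over the other components of $G[R]$) should force either a disjoint edgeless pair realising every split summing to $\beta+1$, or a nonadjacent pair of degree sum at most $2\beta$ --- in either case a contradiction, which proves the theorem.

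The step I expect to be hardest is this last one, for two intertwined reasons. First, $\widetilde{\alpha}(G)$ is defined via the \emph{existence} of one split $(s^{*},t^{*})$ with $s^{*}+t^{*}=\widetilde{\alpha}(G)+1$ admitting no disjoint edgeless pair, and that split is not ours to choose; so a large disjoint edgeless pair with $|A|+|B|\ge\widetilde{\alpha}(G)+1$ is not by itself a contradiction, and one must instead produce a whole family of such pairs realising \emph{all} splits of the relevant sum. This is precisely where the third attachment (widening the $A$-side) and the extra ``$+1$'' in $d_{G}(x)+d_{G}(y)\ge2\widetilde{\alpha}(G)+1$ (widening the $B$-side through the forced lower bound on $d_{G}(a)$) are spent. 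Second, because the endpoints $u,v$ need not have all their neighbours on $P$, fact (iii) cannot be obtained by a one-line extension argument as in the hamiltonicity proof; it requires tracking how rotations at $u$ and $v$ interact with the crossing lemmas for $H$, and pinning down a clean, usable form of (iii) is, I would guess, the most delicate part of the write-up.
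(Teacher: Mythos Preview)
This theorem is \emph{cited} in the paper as prior work of Li and Liu; the paper does not give a direct proof of it. What the paper does prove is the strictly stronger Fan-type statement (Theorem~\ref{hamiltonian-connected theorem}): under $3$-connectivity, $\max\{d_G(x),d_G(y)\}\ge\widetilde{\alpha}(G)+1$ for every distance-$2$ pair already forces hamiltonian-connectedness. Since $d_G(x)+d_G(y)\ge 2\widetilde{\alpha}(G)+1$ implies $\max\{d_G(x),d_G(y)\}\ge\widetilde{\alpha}(G)+1$, the paper's Theorem~\ref{hamiltonian-connected theorem} subsumes the Li--Liu result. So the relevant comparison is between your plan and the paper's proof of Theorem~\ref{hamiltonian-connected theorem}.

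The two approaches are genuinely different. The paper does \emph{not} take a longest non-spanning $u$--$v$ path and analyse a component of the leftover. Instead it passes to an edge-maximal counterexample $G$, shows that the set $V^{*}=\{v:d_G(v)\ge\widetilde{\alpha}(G)+1\}$ is not a clique, chooses a missing edge $e=v_kv_{k+1}$ with both endpoints in $V^{*}$, and argues that $G+e$ is still admissible, hence (by maximality) hamiltonian-connected. Now every Hamilton $(x,y)$-path in $G+e$ is a \emph{spanning} path $v_1\cdots v_n$ forced to use $e=v_kv_{k+1}$, and the whole argument becomes a rerouting problem at the single non-edge $v_kv_{k+1}$: fix the specific split $(s,t)$ with $s+t=\widetilde{\alpha}(G)+1$, slice $N_G(v_k)$ and $N_G(v_{k+1})$ into pieces on either side of an index $r$ chosen so that $|S_1|=s$, and use the definition of $\widetilde{\alpha}$ to force a crossing edge between a size-$s$ successor/predecessor set and a size-$t$ successor/predecessor set, yielding a Hamilton $(x,y)$-path in $G$ itself. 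This buys two things you are struggling to obtain: there are no off-path vertices, so your whole block on components $H$, attachments, and the delicate end behaviour of $u,v$ evaporates; and the contradiction is reached by \emph{using} the single witnessing split $(s,t)$, not by manufacturing edgeless pairs for every split.

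Your sketch, by contrast, has two real soft spots you yourself flag. First, ``rotating at $u$ and $v$'' is not available here: $u$ and $v$ are prescribed endpoints, so the usual P\'osa rotation that swaps an endpoint is illegal, and your item~(iii) (that $u,v$ miss most of $S$) has no clean mechanism behind it. Second, your endgame aims to exhibit disjoint edgeless pairs realising \emph{all} splits summing to $\widetilde{\alpha}(G)+1$; but the definition of $\widetilde{\alpha}$ only guarantees one split that is edge-forcing, and your construction $(V(G)\setminus(V(H)\cup N_G(H)),\,V(H))$ gives you no control over which side reaches $s$ and which reaches $t$. The paper sidesteps both issues entirely by the closure/edge-maximality trick, which is the idea you are missing.
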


In this paper, we give sufficient Fan-type conditions involving bipartite independence number for hamiltonicity and hamiltonian connectedness.

\begin{theorem}\label{hamiltonian theorem}
	Let $G$  be a $2$-connected graph of order at least three. If $\max\{d_G(x),\allowbreak d_G(y)\}\geq \widetilde{\alpha}(G)$ for any nonadjacent vertices $x$ and $y$ with $d_G(x,y)=2$, then $G$ is hamiltonian.
\end{theorem}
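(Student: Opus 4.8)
The plan is to argue by contradiction. Suppose $G$ is $2$-connected with $|G|\ge 3$, satisfies $\max\{d_G(x),d_G(y)\}\ge\widetilde{\alpha}(G)$ for every pair of nonadjacent vertices $x,y$ with $d_G(x,y)=2$, and yet is not hamiltonian; put $\alpha=\widetilde{\alpha}(G)$. Fix a longest cycle $C$ of $G$ --- among all longest cycles, chosen to be extremal with respect to a secondary parameter (for instance the order of a largest component of $G-V(C)$, or the number of such components; the precise choice is fixed once the case analysis is in place) --- and orient $C$. Since $G$ is not hamiltonian, $G-V(C)\ne\emptyset$; let $R$ be a component of $G-V(C)$, write $N_C(R)=\{x_1,\dots,x_k\}$ in the cyclic order induced on $C$, and let $x^+$ denote the successor of $x$ along $C$. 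By $2$-connectivity, $k\ge 2$.

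The first block of the proof is the classical ``non-extendability'' analysis of a longest cycle, carried out by detour and rerouting arguments (replacing an arc $x_iCx_{i+1}$ by a path through $R$; rerouting over a chord $x_i^+x_j^+$ together with a path through $R$): no $x_i^+$ has a neighbour in $R$; the set $\{x_1^+,\dots,x_k^+\}$ is independent; and, with more work, there are additional non-adjacencies between the vertices $x_i^+$ and the interiors of the arcs of $C$ between consecutive $x_j$'s. The decisive consequence is that, choosing $h_i\in V(R)$ with $x_ih_i\in E(G)$, the vertices $x_i^+$ and $h_i$ are distinct, nonadjacent, and at distance exactly $2$ (via the common neighbour $x_i$); the hypothesis therefore applies and yields $\max\{d_G(x_i^+),d_G(h_i)\}\ge\alpha$ for every $i$. (Alternatively, one could attempt to adapt the longest-cycle proof of Li and Liu's degree-sum theorem \cite{Li2025}, checking that it invokes the degree condition only on pairs at distance $2$ and only through the larger of the two degrees.)

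The core step converts this into $\widetilde{\alpha}(G)\ge\alpha+1$, which is absurd. Using the reformulation $\widetilde{\alpha}(G)+1=\min\{\,s+t:\ s,t\ge 1\text{ and }G\text{ has no }(s,t)\text{-bipartite hole}\,\}$, it suffices to produce, for \emph{every} split $s+t=\alpha+1$ with $s,t\ge 1$, disjoint sets $A,B\subseteq V(G)$ with $|A|=s$, $|B|=t$ and no edge between them. Several bipartite holes are available without effort: $V(R)$ versus $V(C)\setminus N_C(R)$, of shape $(|R|,\,|C|-k)$; for each $a$ with $1\le a\le k$, the hole $\{x_1^+,\dots,x_a^+\}$ versus $V(R)\cup\{x_{a+1}^+,\dots,x_k^+\}$, of shape $(a,\,|R|+k-a)$ (there is no edge, since no $x_j^+$ sees $R$ and the $x_j^+$ are mutually nonadjacent); and, whenever $G-V(C)$ has a second component, further holes obtained by appending one of its vertices to $V(R)$. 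One then splits according to where the large degree from the previous step lies. If $d_G(h_i)\ge\alpha$ for some $i$, then $N_G(h_i)\subseteq(V(R)\setminus\{h_i\})\cup N_C(R)$, so $|R|+k\ge\alpha+1$, and a careful combination of the first two families realizes all shapes --- e.g.\ for a split with large $s$ one takes $A=\{x_1^+,\dots,x_s^+\}$ against a size-$(\alpha+1-s)$ subset of $V(R)\cup\{x_{s+1}^+,\dots,x_k^+\}$, which has order $|R|+k-s\ge\alpha+1-s$. If instead $d_G(x_i^+)\ge\alpha$ for every $i$, then all of $x_1^+,\dots,x_k^+$ are vertices of degree at least $\alpha$ whose neighbourhoods lie within $N_C(R)$ together with the arc interiors, and one uses the finer non-adjacencies from the previous step to enlarge the available holes --- locating inside those interiors vertices nonadjacent both to $V(R)$ and to sufficiently many of the $x_j^+$ --- so as once more to realize every shape $(s,\alpha+1-s)$, the desired contradiction.

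I expect the main obstacle to be exactly this last case: every $x_i^+$ has degree at least $\alpha$, $G-V(C)$ consists of a single and possibly small component, and $C$ is ``spread out'' (so $k$ is small compared with $\alpha$ while $|C|$ is forced to be large). There the cheap holes need not cover the balanced splits $s+t=\alpha+1$ with $s,t>|R|$, and one must exploit the detailed structure of the longest cycle: the orders and adjacencies of the arc interiors, the fact that each $x_i^+$ has at least $\alpha$ neighbours confined to a set of order $|C|-k$, further rerouting arguments (for example when an arc interior is a single vertex), and the secondary extremal choice of $C$ to rule out the degenerate configurations. Arranging this so that a hole of \emph{every} required shape is produced simultaneously, rather than merely for one convenient shape, is the delicate heart of the argument.
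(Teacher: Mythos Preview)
Your proposal is a plan rather than a proof, and you yourself flag the decisive gap: in the case where every $x_i^+$ has degree at least $\alpha$, you do not actually produce bipartite holes of all shapes $(s,\alpha+1-s)$, only promise that ``finer non-adjacencies'' and ``further rerouting arguments'' will do it. That promise is not obviously redeemable. When $k$ is small and $|R|$ is small, neither the family $\{x_1^+,\dots,x_a^+\}$ versus $V(R)\cup\{x_{a+1}^+,\dots,x_k^+\}$ nor the family $V(R)$ versus $V(C)\setminus N_C(R)$ covers the balanced splits, and nothing in your sketch forces enough structure inside the arc interiors to manufacture the missing shapes. (Even your ``easy'' Case~A is not fully closed: when $s>k$ and $s>|R|$ simultaneously, which is not excluded by $|R|+k\ge\alpha+1$, you have not exhibited a hole of shape $(s,\alpha+1-s)$.) The difficulty is structural: trying to realize \emph{every} split $s+t=\alpha+1$ is much stronger than what the definition of $\widetilde{\alpha}(G)$ actually gives you.

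The paper avoids this entirely by two changes of setup. First, it works with a longest \emph{path} $P=v_1\cdots v_m$, chosen secondarily to maximize $d(v_1)+d(v_m)$; this extremality drives a rotation argument showing that if $d(v_1)<\widetilde{\alpha}(G)$ then every rotated endpoint also has small degree, the Fan condition at distance~$2$ then forces an initial segment of $P$ to be a clique, and $2$-connectivity yields a contradiction --- so in fact \emph{both} endpoints have degree at least $\widetilde{\alpha}(G)$. Second, having secured large degree at both ends, the paper never tries to build holes of all shapes; it fixes the \emph{single} witnessing pair $(s,t)$ with $s+t=\widetilde{\alpha}(G)+1$ and uses it in the positive direction (``any two sets of sizes $s$ and $t$ span an edge''). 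Splitting $N(v_1)$ and $N(v_m)$ at a suitable index and applying this once to bound $|T_1|$ and once to bound $|S_2|$ gives $d(v_1)\le s+t-2=\widetilde{\alpha}(G)-1$, the desired contradiction. The rotation step (their Claim~2.1) is the idea your plan is missing; once it is in place the bipartite-hole bookkeeping collapses to two applications of a single $(s,t)$.
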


\begin{theorem}\label{hamiltonian-connected theorem}
	Let $G$  be a $3$-connected graph. If $\max\{d_G(x),d_G(y)\}\geq \widetilde{\alpha}(G)$+1 for any nonadjacent vertices $x$ and $y$ with $d_G(x,y)=2$, then $G$ is hamiltonian-connected.
\end{theorem}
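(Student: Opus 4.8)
The plan is to adapt the proof of Theorem~\ref{hamiltonian theorem} from the setting of a longest cycle to that of a longest path joining the two prescribed vertices; I would not attempt a reduction to Theorem~\ref{hamiltonian theorem} by attaching a new vertex $w$ to $u$ and $v$, since passing from $G$ to $G+w$ can raise $\widetilde{\alpha}$ by two, more than the extra $+1$ in the hypothesis can absorb. So, writing $\alpha=\widetilde{\alpha}(G)$, suppose towards a contradiction that $G$ is $3$-connected, satisfies $\max\{d_G(x),d_G(y)\}\ge\alpha+1$ whenever $x,y$ are nonadjacent with $d_G(x,y)=2$, and yet has distinct vertices $u,v$ admitting no spanning $u$-$v$ path. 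Fix a longest $u$-$v$ path $P=p_0p_1\cdots p_\ell$ with $p_0=u$ and $p_\ell=v$; since $G$ is $2$-connected with at least four vertices we have $\ell\ge 2$, so $P$ has at least three vertices, and $W:=V(G)\setminus V(P)$ is nonempty. Let $H$ be a component of $G[W]$. Since $N_G(V(H))\subseteq V(P)$ is a vertex cut of $G$ (or, in the single borderline case, equal to $V(P)$), the $3$-connectivity of $G$ yields at least three attachment vertices $p_{i_1}<p_{i_2}<\cdots<p_{i_k}$, $k\ge 3$.

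The heart of the argument is the crossover analysis, and this is exactly where the fixed endpoints bite. If $h\in V(H)$ is adjacent to an attachment vertex $p_a$ with $0<a<\ell$, then inserting $h$ between $p_a$ and one of its path-neighbours would lengthen $P$, so $p_{a-1}\not\sim h$ and $p_{a+1}\not\sim h$; moreover, for attachment vertices $p_a,p_b$ ($a<b$) of a common vertex of $H$, reversing the segment of $P$ between $p_{a+1}$ and $p_b$ and threading it through $H$ shows that $p_{a+1}p_{b+1}$ (resp.\ $p_{a-1}p_{b-1}$) cannot be an edge without producing a longer $u$-$v$ path, and a similar move handles shifted successors and predecessors straddling a single attachment vertex. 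The non-edges of the first type are the ones visible to the hypothesis: since $p_{a+1}\,p_a\,h$ is a path of length two, $d_G(p_{a+1},h)=2$, so $\max\{d_G(p_{a+1}),d_G(h)\}\ge\alpha+1$, and likewise with $p_{a-1}$. I would then split according to whether some vertex of $H$ has degree $\ge\alpha+1$: if not, every path-successor and path-predecessor of an attachment vertex has degree $\ge\alpha+1$, and in either case — discarding at most one attachment vertex, the one lying next to an endpoint that blocks a rerouting, which is where $k\ge 3$ enters — I obtain a set $D$ of vertices of degree $\ge\alpha+1$ and a set $F\supseteq V(H)$ with $F$ disjoint from $D$ and joined to $D$ by no edge of $G$, and with $|F|$ substantial, the extra $+1$ in the degree hypothesis being exactly what allows the two endpoints $u,v$ to be swept into $F$.

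The contradiction is then extracted from $\widetilde{\alpha}(G)=\alpha$ by a counting argument of the same flavour as in the proof of Theorem~\ref{hamiltonian theorem}: playing the bound $d_G(d)\ge\alpha+1$ (for $d\in D$) off against the edge-freeness of $(D,F)$ and against $F$ being large yields two disjoint vertex sets whose sizes sum to $\alpha+1$ and between which $G$ has no edge, which is impossible since $\alpha=\widetilde{\alpha}(G)$. I expect this last step — and particularly the bookkeeping near $u=p_0$ and $v=p_\ell$ — to be the real obstacle: a path cannot be rotated at its endpoints, so the crossover moves degenerate there, and one has to verify that once these unavoidable end-effects are paid for, $D$ and $F$ are still large enough for the count to close. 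That is precisely the purpose of the two features distinguishing this theorem from Theorem~\ref{hamiltonian theorem}: the third unit of connectivity supplies the spare attachment vertex, and the extra $+1$ in the degree bound supplies the slack needed to absorb $u$ and $v$ into $F$. I would organize the remaining work into the boundary case $k=3$, the case in which some consecutive attachment vertices are adjacent along $P$, and the case of a small component $H$.
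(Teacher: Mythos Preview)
Your approach diverges from the paper's at the very first move, and the divergence is not cosmetic. The paper does not take a longest $(u,v)$-path and analyse a component off it; instead it passes to an edge-maximal counterexample $G$, proves that the set $V^*=\{v:d_G(v)\ge\widetilde\alpha(G)+1\}$ is not a clique, picks nonadjacent $u,v\in V^*$, and observes that $G+uv$ is still admissible and hence hamiltonian-connected by maximality. The missing Hamilton $(x,y)$-path in $G$ then exists in $G+uv$ and must use the edge $uv$, so one obtains a \emph{Hamilton} path $v_1\cdots v_n$ on which $u=v_k$, $v=v_{k+1}$ are consecutive and both have degree $\ge\widetilde\alpha(G)+1$. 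From there the argument is a pure on-path crossover count, entirely parallel to the proof of Theorem~\ref{hamiltonian theorem}: the whole point of the maximality device is to manufacture two consecutive high-degree vertices whose neighbourhoods lie entirely on the path, so that the successor/predecessor partitions indexed by the witnessing pair $(s,t)$ make sense.

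Your proposal, by contrast, stalls at the ``counting argument'' you yourself flag as the obstacle, and I do not see how to unstick it. You produce a set $D$ of high-degree vertices (path-neighbours of attachment points) and a set $F\supseteq V(H)$ with $[D,F]=\emptyset$, but to contradict $\widetilde\alpha(G)=\alpha$ you need disjoint sets of the \emph{specific} sizes $s$ and $t$ with $s+t=\alpha+1$ that witness $\widetilde\alpha(G)$, and nothing in your setup ties $|D|$ or $|F|$ to those numbers: $|V(H)|$ may equal $1$, and the number of attachment vertices, while at least $3$, is otherwise uncontrolled and bears no relation to $s$ or $t$. The high degree of the vertices in $D$ does not help either, because their neighbours may lie anywhere in $G$ --- unlike $v_k,v_{k+1}$ in the paper's proof, whose neighbours are all on a Hamilton path and therefore admit the successor/predecessor partitions that drive the count. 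The missing idea is precisely the edge-maximality reduction that converts the problem into one with a Hamilton path and two consecutive high-degree vertices on it; without it, the off-path component $H$ and the fixed endpoints $u,v$ leave you with no handle on $s$ and $t$.
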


The bounds in Theorems~\ref{hamiltonian theorem} and~\ref{hamiltonian-connected theorem} are tight.
For Theorem~\ref{hamiltonian theorem}, consider the graph $G_1= K_{n}\vee\bar{K_{n+1}}$. It is easy to verify $\widetilde{\alpha}(G_1)=n+1$. This graph is $2$-connected and satisfies $\max\{d_{G_1}(x), d_{G_1}(y)\} = \widetilde{\alpha}(G_1) - 1$ for any nonadjacent vertex pairs $x,y$ with $d_{G_1}(x,y) = 2$. However, $G$ is not hamiltonian.
For Theorem~\ref{hamiltonian-connected theorem}, consider the graph $G_2 = K_{n}\vee\overline{K_{n}}$. It is easy to verify $\widetilde{\alpha}(G_2)=n$. This graph is $3$-connected for $n\geq 3$ and satisfies $\max\{d_{G_2}(x), d_{G_2}(y)\} = \widetilde{\alpha}(G_2)$ for any nonadjacent vertex pairs $x,y$ with $d_{G_2}(x,y) = 2$. Yet $G_2$ is not hamiltonian-connected.

The $3$-connectivity condition in Theorem \ref{hamiltonian-connected theorem} is necessary. Let $a\geq 5$ be an integer. Consider the graph $G = (K_{a-2}\cup K_1)\vee K_2\triangleq A\vee B$. Clearly, $G$ is not 3-connected, $\widetilde{\alpha}(G)= 2$ and $\max\{\deg_G(x),\deg_G(y)\}=a-2$ for any nonadjacent vertices $x,y$ at distance 2. We have $\max\{\deg_G(x),\deg_G(y)\}\geq$ $\widetilde{\alpha}(G)+1.$ Since there exists no Hamilton path with endpoints in $B$, $G$ is not hamiltonian-connected.

We organize the remainder of this paper as follows: Section \ref{sec2} presents the proofs of Theorem
\ref{hamiltonian theorem}, while Section \ref{sec3} focuses on the proof of Theorem \ref{hamiltonian-connected theorem}.

\section{Proof of Theorem \ref{hamiltonian theorem}}\label{sec2}

Before starting to prove Theorem \ref{hamiltonian theorem}, we need some definitions and notations.
For a vertex $u \in V(G)$ and a subgraph $H \subseteq G$, we write $N_H(u)$ for the set of neighbors of $u$ that are contained in $V(H)$. Given a subset $S \subseteq V(G)$, define
$N_G(S) = \bigcup_{x \in S} N_G(x) \setminus S$, and $N_H(S) = N_G(S) \cap V(H)$. 
The subgraph of $G$ induced by a vertex subset $S$ is denoted by $G[S]$, and we write $G - S$ for the induced subgraph $G[V(G) \setminus S]$. A clique in a graph is an induced subgraph such that any two vertices in this subgraph are adjacent. For disjoint vertex subsets $A,B\in V(G)$, denote by $[A,B]$ the edge set with one terminal vertex in $A$, and the other in $B$.

A \emph{path} in a graph is a sequence of distinct vertices $v_0, v_1, \dots, v_k$ such that $v_{i-1}v_i \in E(G)$ for all $i = 1, \dots, k$.  A \emph{segment} refers to a subpath of a path, i.e., a consecutive subsequence $v_i, v_{i+1}, \dots, v_j$ of a path $v_0, v_1, \dots, v_k$ with $0 \le i < j \le k$. Let $P$ be an oriented $(u, v)$-path. We use $P[x,y]$ to denote the segment of $P$ between two  vertices $x,y\in V(P)$, $\overrightarrow{P}[x,y]$ to denote the segment of $P$ from $x$ to $y$ which follows the orientation of $P$, and $\overleftarrow{P}[x,y]$ to denote the opposite segment of $P$ from $x$ to $y$. Moreover, for $x\neq v$, denote by $x^{+}$ the immediate successor on $P$; and for $x\neq u$, denote by $x^{-}$ the predecessor on $P$. For $S\subseteq V(P)$, let $S^{+}=\{x^{+}: x\in S\setminus\{v\}\}$ and $S^{-}=\{x^{+}: x\in S\setminus\{u\}\}$. A {\em matching} in a graph is a set of pairwise nonadjacent edges. 

\begin{proof}[\bf Proof of Theorem \ref{hamiltonian theorem}]

Suppose that $G$ is a graph satisfying the given condition and $G$ has no Hamilton cycle. We shall arrive at a contradiction. Let $P =  v_1,v_2, \ldots, v_m$ be a longest path in $G$ of length $m-1$, chosen so that $d(v_1) + d(v_m)$ is as large as possible. 
Then further we suppose $G$ has no cycle of length $m$. In fact, if $G$ has a cycle of length $m$, then either $G$ is hamiltonian or $G$ has a path of length $m$, both cases lead to a contradiction. Without loss of generality, suppose $d_G(v_1)\leq d_G(v_m)$.

\begin{claim}\label{vd}
	$d_G(v_1)\ge \widetilde\alpha(G)$.
\end{claim}

\begin{proof} Suppose to the contrary, $d_G(v_1)< \widetilde\alpha(G)$. Since $G$ is 2-connected, $v_1$ has a neighbor other than $v_2$. Let $v_\ell$ be a neighbor of $v_1$. Choose such that $\ell$ is as large as possible. Note that
	\[v_{\ell-1},\overleftarrow{P}[v_{\ell-1},v_1],v_1,v_\ell,\overrightarrow{P}[v_\ell,v_m],v_m\]
	is a path of length $m-1$ with endpoints $v_{\ell-1}$ and $v_m$. By the choice of $P$, we have $d_G(v_{\ell-1})\le d_G(v_1)<\widetilde\alpha(G)$.
	Since $\max\{d_{G}(x),d_G(y)\}\geq \widetilde{\alpha}(G)$ for any $x,y\in V(G)$ with $d_G(x,y)=2$, $v_{\ell-1}$ is adjacent to  $v_1$. With the same argument, we have
	\[v_{\ell-2},\overleftarrow{P}[v_{\ell-2},v_1],v_1,v_{\ell-1},\overrightarrow{P}[v_{\ell-1},v_m],v_m\]
	is a path of length $m-1$ with endpoints $v_{\ell-2}$ and $v_m$. Then $d_G(v_{\ell-2})<\widetilde\alpha(G)$. Since $\max\{d_{G}(x),$ $d_G(y)\}\geq \widetilde{\alpha}(G)$ for any $x,y\in V(G)$ with $d_G(x,y)=2$, $v_{\ell-2}$ is adjacent to $v_1$.
	
	Repeating the analysis process, for each $i$ with $1\leq i\leq \ell-1$, we have $d_G(v_i)<\widetilde\alpha(G)$, and $v_i$ is not adjacent to $v_m$ because $G$ has no cycle of length $m$. Moreover, since $\max\{d_G(x),d_G(y)\}\ge \widetilde\alpha(G)$ for any nonadjacent vertices $x$ and $y$ with $d_G(x,y)=2$ and $d_G(v_i,v_j)\le 2$ for any $2\le i< j\le \ell - 1$, we have
	\begin{equation*}
		G[\{v_1,v_2,...,v_{\ell-1}\}]
	\end{equation*}
	is a clique. Recall that $G$ is 2-connected. We have
	$$[\{v_1,v_2,...,v_{\ell-1}\}, \{v_{\ell +1},v_{\ell+2},..., v_m\}]\ne \emptyset.$$
	Hence there exist two integers $j$ and $j^{\prime}$ such that
	$v_j$ is adjacent to $v_{j'}$, where $2\leq j\leq \ell-1$ and $\ell+1\leq j'\leq m-1$. Note that
	\[v_{j'-1},\overleftarrow{P}[v_{j'-1},v_{j+1}],v_{j+1},v_1,\overrightarrow{P}[v_1,v_j],v_j,v_{j'},\overrightarrow{P}[v_{j'},v_m],v_m\]
	is a path of length $m-1$ with endpoints $v_{j'-1}$ and $v_m$.  According to the choice of $P$, $d_G(v_{j'-1})\le d_G(v_1)<\widetilde\alpha(G)$. Recall that $d_G(v_i)<\widetilde\alpha(G)$ for each $1\leq i\leq \ell-1$. Then  $v_{j'-1}$  is adjacent to $v_j$ as $d_G(v_{j'-1},v_j)\le 2$. It follows that $v_{j'-1}$ is adjacent to $v_1$ as $d_G(v_{j'-1},v_1)\le 2$. Then $d_G(v_{j'-1},v_2)\le 2$; hence $v_{j'-1}$ is adjacent to $v_2$. With the same argument, we have
	$$G[\{v_1,v_2,...,v_{\ell-1}\}\cup \{v_{j'-1}\}]$$
	is a clique. Combining with the choice of $\ell$, $j'=\ell+1$. Now
	\[v_\ell,\overleftarrow{P}[v_\ell,v_{j+1}],v_{j+1},v_1,\overrightarrow{P}[v_1,v_j],v_j,v_{\ell+1},\overrightarrow{P}[v_{\ell+1},v_m],v_m\]
	is a path of length $m-1$ with endpoints $v_\ell$ and $v_m$. By the choice of $P$, we have
	$$d_G(v_\ell)\le d_G(v_1)\leq \ell-1<\ell\le d_G(v_\ell),$$
	a contradiction. This proves Claim~\ref{vd}.\end{proof}

Since $1\le s<\widetilde\alpha(G)$, by Claim~\ref{vd}, there exists an integer $k$ where $2\leq k\leq m-1$, such that $|N_G(v_1)\cap \{v_i: 2\leq i\leq k\}|=s$. Let $S_1=N_G(v_1)\cap \{v_i: 2\leq i\leq k\}$, $S_2= N_G(v_1)\cap \{v_i:k+1\leq i\leq m-1\}$, $T_1=N_G(v_m)\cap \{v_j: k\leq j\leq m-1\}$ and $T_2=N_G(v_m)\cap \{v_j: 2\leq j\leq k-1\}$. Hence we have
$$N_G(v_1)=S_1\cup S_2~\text{and}~N_G(v_m)=T_1\cup T_2.$$
Since $G$ has no cycle of length $m$, it follows that
\begin{equation}\label{ST}
	[S_1^-, T_1^+]=\emptyset.
\end{equation}
Recall that $|S_1|=s$, then $|S_1|=|S_1^-|=s$. According to  (\ref{ST}), we have that $|T_1^+|=|T_1|\le t-1$. Since $d_G(v_m)\ge d_G(v_1)\ge \widetilde\alpha(G)$, we have
\begin{equation}\label{T2}
	|T_2| = d_G(v_m)-|T_1|\ge \widetilde\alpha(G)-(t-1)=s.
\end{equation}

Moreover, we have
\begin{equation}\label{SvT}
	[S_2^+\cup \{v_1\}, T_2^+]=\emptyset.
\end{equation}
In fact, if $[\{v_1\}, T_2^+]\neq\emptyset$, then there exists $v_j\in T_2$ such that $v_j^+$ is adjacent to $v_1$. Note that
\[v_1,\overrightarrow{P}[v_1,v_j],v_j,v_m,\overleftarrow{P}[v_m,v_j^+],v_j^+,v_1\]
is a cycle of length $m$, a contradiction.

If $[S_2^+, T_2^+]\neq\emptyset$, there exist $v_{j'}\in S_2$ and $v_{j''}\in T_2$ such that $v_{j'}^+$ is adjacent to
$v_{j''}^+$. Then
\[v_1,\overrightarrow{P}[v_1,v_{j''}],v_{j''},v_m,\overleftarrow{P}[v_m,v_{j'}^+],v_{j'}^+,v_{j''}^+,\overrightarrow{P}[v_{j''}^+,v_{j'}],v_{j'},v_1\]
is a cycle of length $m$, a contradiction.

By (\ref{T2}) and (\ref{SvT}), we have
$$|S_2^+|\le (t-1)-1=t-2.$$
Then $d_G(v_1)=|S_1|+|S_2|\le s+t-2=\widetilde\alpha(G)-1$, which contradicts $d_G(v_1)\ge \widetilde\alpha(G)$. This completes Theorem~\ref{hamiltonian theorem}. \end{proof}

\section{Proof of Theorem \ref{hamiltonian-connected theorem}}\label{sec3}

\begin{proof}[\bf Proof of Theorem \ref{hamiltonian-connected theorem}]  
We say a graph $G$ is {\em admissible} if $\max \{d_G(x), d_G(y)\}\ge \widetilde{\alpha}(G)+1$ for any  nonadjacent vertices $x$ and $y$ with $d_G(x,y)=2$.  Let $G$ be a counterexample to Theorem~\ref{hamiltonian-connected theorem} of order $n$. Subject to this condition, choose $G$ has the maximum size.  That is, for any edge $e$, either $G+e$ is hamiltonian-connected, or $G+e$ is not admissible.  Moreover, $G$ is not complete and $\widetilde{\alpha}(G)\geq 2$. Denote $V^*=\{v\in V(G):d_G(v)\geq \widetilde{\alpha}(G)+1\}$.
 \begin{claim}\label{not-clique}
	$G[V^{*}]$ is not a clique.
\end{claim}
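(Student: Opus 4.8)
The plan is to assume for contradiction that $G[V^*]$ is a clique and to deduce that $G$ is hamiltonian-connected, contradicting the choice of $G$. First, $V^*\neq V(G)$, for otherwise $G=G[V^*]$ would be complete, contradicting $\widetilde\alpha(G)\ge 2$; set $I:=V(G)\setminus V^*\neq\emptyset$, so every $x\in I$ satisfies $3\le\delta(G)\le d_G(x)\le\widetilde\alpha(G)$. The key structural fact is that no two nonadjacent vertices $x,y\in I$ have a common neighbour: such a neighbour would give $d_G(x,y)=2$, so admissibility would force $\max\{d_G(x),d_G(y)\}\ge\widetilde\alpha(G)+1$ and hence $x\in V^*$ or $y\in V^*$ --- impossible. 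Therefore $G[I]$ contains no induced $P_3$, so each of its components is a clique; call them $I_1,\dots,I_r$. Since these are the components of $G[I]$, no edge of $G$ joins distinct $I_a$ and $I_b$, so $N_G(I_a)\subseteq V^*$ for every $a$; and whenever $I_a\cup N_G(I_a)\neq V(G)$, the cutset $N_G(I_a)$ has size at least $3$ by $3$-connectivity. (The lone remaining case, $r=1$ with $V^*=N_G(I_1)$, makes $G$ a union of the two cliques $I_1$ and $V^*$; under the present hypotheses this $G$ is hamiltonian-connected, and I would treat it separately.)

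To conclude, fix distinct $u,v\in V(G)$ and construct a Hamilton $(u,v)$-path. Since $V^*$ is a clique, $G[V^*]$ has a Hamilton path between any two prescribed vertices, with the rest of $V^*$ in any prescribed order; since each $I_a$ is a clique, it has a Hamilton path between any two of its vertices. Moreover, for each $a$ there is freedom in choosing two distinct "docking" vertices $p_a,q_a\in N_G(I_a)$ (at least three candidates, by $|N_G(I_a)|\ge 3$; one also invokes $\widetilde\alpha(G)\ge 2$ and $3$-connectivity to discard degenerate shapes of $I_a$) together with a $(p_a,q_a)$-path $D_a$ in $G$ whose interior is exactly $V(I_a)$. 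The recipe is then: choose an ordering of $V^*$ along a path so that each selected pair $p_a,q_a$ is consecutive, and for every $a$ replace the edge $p_aq_a$ of this path by $D_a$; if $u$ or $v$ lies in some $I_c$, that component is instead attached at the corresponding end of the path, with the routine modifications. The resulting path is a Hamilton $(u,v)$-path, so $G$ is hamiltonian-connected --- the required contradiction.

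The main obstacle is making the last recipe feasible: one must order $V^*$ and pick the pairs $\{p_a,q_a\}\subseteq N_G(I_a)$ so that all $r$ of them occur as \emph{distinct} consecutive pairs along a single path, which can fail if many components dock onto a small common set $W\subseteq V^*$. I expect to rule this out by a counting argument built on the structure above: if $W$ contains the neighbourhoods of many components, then $\bigl(\bigcup_a I_a\bigr)\cup(V^*\setminus W)$ induces a large disjoint union of cliques, which by the definition of $\widetilde\alpha$ forces $\widetilde\alpha(G)$ to be correspondingly large --- incompatible with $V^*$ being a clique each of whose vertices has degree at least $\widetilde\alpha(G)+1$. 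Together with the bound $|N_G(I_a)|\ge 3$ from $3$-connectivity, this should make the ordering and the choices go through, completing the proof.
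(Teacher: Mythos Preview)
Your approach is essentially the paper's: reduce to cliques $I_1,\dots,I_r$ attached to the clique $V^*$, then argue hamiltonian-connectedness directly. But you overlook that your own ``key structural fact'' already dissolves the obstacle you worry about. You prove that no two nonadjacent vertices of $I$ share a common neighbour. Now take $x\in I_a$ and $y\in I_b$ with $a\neq b$: since $I_a,I_b$ are distinct components of $G[I]$, the vertices $x,y$ are nonadjacent in $G$, hence share no common neighbour, and in particular no common neighbour in $V^*$. This gives
\[
N_G(I_a)\cap N_G(I_b)=\emptyset\quad\text{for all }a\neq b,
\]
which is exactly what the paper records as its equation~(\ref{neighbor-emptyset}). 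Combined with $|N_G(I_a)|\ge 3$ (from $3$-connectivity) you get $|V^*|\ge 3r$, and the docking pairs $\{p_a,q_a\}$ can be chosen \emph{pairwise disjoint}. Since $V^*$ is a clique, any ordering of $V^*$ is a Hamilton path of $G[V^*]$, so placing each pair $p_a,q_a$ consecutively is trivial. Your proposed counting detour via $\widetilde\alpha$ is therefore unnecessary, and as stated it is too vague to be verified.

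With the disjointness in hand, your splicing recipe and the paper's ``it is easy to see'' are at the same level of detail; both leave the endpoint cases (e.g.\ $u,v$ in the same $I_c$, or $|I_a|\le 2$) to the reader. If you want a complete write-up you should spell out, using $3$-connectivity, that each $I_a$ admits two distinct exit vertices into two distinct vertices of $N_G(I_a)$ (and three when $|I_a|\ge 3$, via a matching of size three as the paper invokes), which is what makes the endpoint cases go through.
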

\begin{proof}[Proof of Claim~\ref{not-clique}]
	To the contrary, suppose $G[V^{*}]$ is a clique. We assert every component of $G-V^{*}$ is a clique. Let $D_1, D_2, \ldots,  D_{m}$ be the connected components of $G-V^{*}$. In fact, if some component $D_i$ of $G-V^{*}$ is not a clique, then there exists two nonadjacent vertices $x,y$ at distance two in $D_i$, contradicting the condition $\max \{d_G(x),d_G(y)\}\ge \widetilde{\alpha}(G)+1$.

	First, consider $m=1$. If $|V(D_1)|\ge 3$ and $|V^*|\ge 3$, since $G$ is 3-connected, there exists a matching of cardinality three between $D_1$ and $V^{*}$. If $|V(D_1)|\le 2$ or $|V^*|\le 2$, then $|N_{G[V^{*}]}(V(D_1))|\ge 3$ or  $|N_{D_1}(V^*)|\ge 3$.   In any case, $G$ is hamiltonian-connected.
	
	Next, consider $m\ge 2$. For any $x\in V(D_i)$ and $y\in V(D_j)$, where $i\neq j$, we have
	\begin{equation}\label{neighbor-emptyset}
	N_{G[V^{*}]}(x)\cap N_{G[V^{*}]}(y)=\emptyset.
	\end{equation}

In fact, assume $N_{G[V^{*}]}(x)\cap N_{G[V^{*}]}(y)\neq \emptyset$. Since  $x\in V(D_i)$ and $y\in V(D_j)$ where $i\neq j$, we have $x$ is nonadjacent to $y$, $d_{G}(x)\le \widetilde{\alpha}(G)$ and $d_{G}(y)\le \widetilde{\alpha}(G)$. Then $d_G(x,y)=2$, contradicting $\max \{d_G(x),d_G(y)\}\ge \widetilde{\alpha}(G)+1$. Moreover, $|N_{G[V^*]}(V(D_i))| \allowbreak \ge 3$ for $1\le i\le m$ as $G$ is 3-connected. By (\ref{neighbor-emptyset}), we have $|V^*|\ge 3m$.

	For all $1\le i\le m$, if $|V(D_i)|\ge 3$, then there exists a matching of cardinality three between $V(D_i)$ and $V^{*}$. In this case, we distinguish the following three cases: 1) $x, y \in V^*$, 2) $x \in D_i$ and $y \in D_j$ with $i \ne j$, and 3) $x \in V^*$ and $y \in D_i$ (or vice versa). Since $G[V^*]$ is a clique and there exists a matching of cardinality three between $V(D_i)$ and $V^{*}$, by (\ref{neighbor-emptyset}), it is easy to see that $G$ is hamiltonian-connected.

With the same grgument, if there exists a $D_i$ such that $|V(D_i)|\leq 2$, then $|N_{G[V^{*}]}(V(D_i))|\ge 3$. By (\ref{neighbor-emptyset}) and $|V^{*}|\geq 3m$, we can easily see that $G$ is hamiltonian-connected. This proves Claim~\ref{not-clique}.
\end{proof}

By Claim~~\ref{not-clique}, $G$ contains two nonadjacent vertices with degree at least $\widetilde{\alpha}(G)+1$. We choose $e=uv$ in $E(\overline{G})$ such that $\min \{d_G(u), d_G(v)\}$ is maximized. This means $u,v\in V^*$ and $\min \{d_G(u), d_G(v)\}\geq \widetilde{\alpha}(G)+1$. Noting that adding edges does not increase the bipartite independence number, we have the following claim.

\begin{claim}\label{e}
	$G+e$ is hamiltonian-connected.
\end{claim}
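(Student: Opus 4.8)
The plan is to establish Claim~\ref{e} by leveraging the edge-maximality of the counterexample $G$ together with the fact that $u,v\in V^*$. Recall that $G$ was chosen so that for every edge $f\in E(\overline G)$, either $G+f$ is hamiltonian-connected or $G+f$ fails to be admissible. Since adding an edge never increases $\widetilde\alpha$, the only way admissibility could break in $G+e$ is if some pair of vertices that were at distance two and nonadjacent in $G+e$ now witnesses $\max\{d_{G+e}(x),d_{G+e}(y)\}<\widetilde\alpha(G+e)\le\widetilde\alpha(G)+1$. So the first step is to show no such bad pair exists in $G+e$, i.e., that $G+e$ is admissible; then the maximality dichotomy immediately forces $G+e$ to be hamiltonian-connected.

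To rule out a bad pair in $G+e$, I would argue as follows. Let $\{x,y\}$ be a pair of nonadjacent vertices at distance two in $G+e$ with $\max\{d_{G+e}(x),d_{G+e}(y)\}\le\widetilde\alpha(G)$. Since passing from $G$ to $G+e$ can only decrease distances and increase degrees, $x$ and $y$ are nonadjacent in $G$ and $d_G(x),d_G(y)\le d_{G+e}(x),d_{G+e}(y)\le\widetilde\alpha(G)$, so neither $x$ nor $y$ lies in $V^*$ (and in particular $\{x,y\}\cap\{u,v\}=\emptyset$, so $e$ contributes nothing to the degrees of $x$ or $y$, giving $d_G(x)=d_{G+e}(x)$ and likewise for $y$). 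Now I consider the distance between $x$ and $y$ in $G$ itself: it is at least $2$ (they are nonadjacent in $G$) and, because the only new edge is $uv$, if $d_G(x,y)\ge 3$ then any $x$–$y$ path of length two in $G+e$ must use the edge $uv$, forcing $\{x,y\}$ to be adjacent to $\{u,v\}$ in $G$ — concretely, say $xu,vy\in E(G)$ so that $d_G(x,y)\le 3$, and in fact a common neighbor would have to exist, contradicting $d_G(x,y)\ge3$ unless $d_G(x,y)=3$ with $x\sim u$, $u\sim v$ (new), $v\sim y$. Handling this case is where the real content lies: I would use that $u,v\in V^*$ and the choice of $e=uv$ as the nonadjacent pair in $V^*$ maximizing $\min\{d_G(u),d_G(v)\}$, together with the admissibility of $G$ applied to the pair $\{x,y\}$ if $d_G(x,y)=2$ in $G$, to derive $\max\{d_G(x),d_G(y)\}\ge\widetilde\alpha(G)+1$, contradicting $d_G(x),d_G(y)\le\widetilde\alpha(G)$.

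More carefully, the clean subcase is $d_G(x,y)=2$: then $G$ admissible gives $\max\{d_G(x),d_G(y)\}\ge\widetilde\alpha(G)+1$, contradicting that neither is in $V^*$. The remaining subcase $d_G(x,y)=3$ is the main obstacle, and I expect to resolve it by noting that then in $G+e$ the unique length-two $x$–$y$ path passes through one endpoint of $e$, say $x\sim u$ and $u\sim_{G+e} v\sim y$ — but wait, that is a length-three walk; a genuine common neighbor in $G+e$ of $x$ and $y$ must be a single vertex $w$ with $xw,wy\in E(G+e)$, and since only $uv$ is new, either $w\notin\{u,v\}$ (then $xw,wy\in E(G)$, so $d_G(x,y)=2$, the previous subcase) or $w\in\{u,v\}$, say $w=u$, with one of $xu,uy$ equal to the new edge $e=uv$; then $\{x,y\}\cap\{v\}\ne\emptyset$, contradicting $\{x,y\}\cap\{u,v\}=\emptyset$. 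Hence $d_G(x,y)=3$ is impossible, the bad pair cannot exist, $G+e$ is admissible, and by maximality $G+e$ is hamiltonian-connected, proving Claim~\ref{e}. I would write this up compactly, foregrounding the two observations that $\widetilde\alpha$ is monotone under edge addition and that a new common neighbor in $G+e$ forces $d_G(x,y)=2$, since those are exactly what reduce everything to the admissibility hypothesis on $G$.
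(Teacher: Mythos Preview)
Your argument is correct and follows essentially the same route as the paper: show $G+e$ is admissible (using that $\widetilde\alpha$ is monotone under edge addition and that any new length-two path through $e$ must have an endpoint in $\{u,v\}\subseteq V^*$), then invoke the maximality of $G$. The paper compresses this into three lines by noting directly that every length-two path containing $e$ has an endpoint in $\{u,v\}$, so your self-correcting detour through the $d_G(x,y)=3$ scenario can be dropped in the write-up.
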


\begin{proof}[Proof of Claim~\ref{e}]
	In $G+e$, every path of length two that contains the edge $e$ has one endpoint in $\{u, v\}$. Since $\min \{d_G(u), d_G(v)\}\geq \widetilde{\alpha}(G)+1$, we have $\max\{d_{G+e}(x_1),d_{G+e}(y_1)\}\geq \widetilde{\alpha}(G)+1\ge \widetilde{\alpha}(G+e)$+1 for any nonadjacent vertices $x_1$ and $y_1$ with $d_{G+e}(x_1,y_1)=2$. This means $G+e$ is admissible, which implies that $G+e$ must be hamiltonian-connected.
\end{proof}

 Since $G$ is a counterexample, there exist two distinct vertices $x$ and $y$ such that $G$ has no Hamilton $(x,y)$-path. By Claim~\ref{e}, let $P$ be a Hamilton $(x,y)$-path in $G+e$. Assume $P=v_1v_2,...,v_n$, where $v_1=x$ and $v_n=y$, and the edge $e=uv=v_kv_{k+1}$ with $d_G(v_{k+1})\ge d_G(v_k)\ge \widetilde{\alpha}(G)+1$.

Let $s$ be an integer such that $1\leq s\leq t$ and $\widetilde\alpha(G)+1=s+t$. Since $\widetilde\alpha(G)\ge 2$, $1\le s\le \frac{\widetilde\alpha(G)+1}{2}<\widetilde\alpha(G)$. Since $s< \widetilde\alpha(G)$ and $d_G(v_k) \geq \widetilde{\alpha}(G) + 1$, there exists an integer $r$ where $1\leq r\leq m$, such that $|N_G(v_k)\cap \{v_i: 1\leq i\leq r\}|=s$. We choose $r$ to be as small as possible, which implies that $v_r\in N_G(v_k)$. Let $S_1=N_G(v_k)\cap \{v_i: 1\leq i\leq r\}$. Then $|S_1|=s$. Moreover, since $v_k$ is not adjacent to $v_{k+1}$, it implies that either $1\leq r\leq k-1$ or $k+2\leq r\leq n$. Hence we consider two cases in the following.

\begin{case}\label{subcase1.1}
	$1\leq r\leq k-1$.
\end{case}

Let $T_1=N_G(v_{k+1})\cap \{v_j: r+1\leq j\leq k-1\}$ and $R_1=N_G(v_{k+1})\cap \{v_j: k+2\leq j\leq n\}$.

\begin{claim}\label{T1R1}
	$|T_1\cup R_1|\le t-1$.
\end{claim}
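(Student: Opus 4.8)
The plan is to argue by contradiction. Suppose $|T_1\cup R_1|\ge t$; I will produce disjoint sets $A,B\subseteq V(G)$ with $|A|=s$, $|B|=t$ and no edge of $G$ between them, which is impossible by the choice of the pair $(s,t)$ (for which any two disjoint subsets of $V(G)$ of sizes $s$ and $t$ span an edge of $G$). Concretely I take $A=S_1^{+}$ and $B$ to be a $t$-subset of $\{v_{k+1}\}\cup T_1^{+}\cup R_1^{-}$.

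The input is two families of ``forbidden reroutings'': in each case the displayed vertex ordering spans $V(G)$, runs from $v_1=x$ to $v_n=y$, and uses no edge of $G$ meeting both $v_k$ and $v_{k+1}$, so it would be a Hamilton $(x,y)$-path of $G$, which does not exist. \textbf{(I)} For $v_a\in S_1$ and $v_j\in T_1$ (so $a\le r<j$) one cannot have $v_a^{+}v_j^{+}\in E(G)$: otherwise $\overrightarrow{P}[v_1,v_a],\,\overleftarrow{P}[v_k,v_j^{+}],\,\overrightarrow{P}[v_a^{+},v_j],\,\overrightarrow{P}[v_{k+1},v_n]$, joined by the edges $v_av_k$, $v_j^{+}v_a^{+}$, $v_jv_{k+1}$, is such a path (when $j=k-1$ the reversed segment $\overleftarrow{P}[v_k,v_j^{+}]$ is the single vertex $v_k$). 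Hence $[S_1^{+},T_1^{+}]=\emptyset$. \textbf{(II)} Call $w$ a \emph{good left end} if $G[\{v_1,\ldots,v_k\}]$ has a Hamilton path from $v_1$ to $w$, and $w'$ a \emph{good right end} if $G[\{v_{k+1},\ldots,v_n\}]$ has a Hamilton path from $v_n$ to $w'$; splicing two such paths along an edge $ww'\in E(G)$ yields a Hamilton $(x,y)$-path of $G$, so no good left end is $G$-adjacent to a good right end. For $v_a\in S_1$ the path $\overrightarrow{P}[v_1,v_a],\,\overleftarrow{P}[v_k,v_a^{+}]$ shows $v_a^{+}$ is a good left end; for $v_b\in R_1$ the path $\overleftarrow{P}[v_n,v_b],\,\overrightarrow{P}[v_{k+1},v_b^{-}]$ shows $v_b^{-}$ is a good right end; and $v_{k+1}$ is trivially a good right end. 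Therefore $[S_1^{+},\,\{v_{k+1}\}\cup R_1^{-}]=\emptyset$.

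Combining (I) and (II) gives $[S_1^{+},\,\{v_{k+1}\}\cup T_1^{+}\cup R_1^{-}]=\emptyset$, and it remains to check cardinalities. Since $S_1\subseteq\{v_1,\ldots,v_r\}$ with $r\le k-1$, the successor map is injective on $S_1$, so $|S_1^{+}|=s$ and $S_1^{+}\subseteq\{v_2,\ldots,v_{r+1}\}$; similarly $T_1^{+}\subseteq\{v_{r+2},\ldots,v_k\}$ and $R_1^{-}\subseteq\{v_{k+1},\ldots,v_{n-1}\}$. These three index ranges are pairwise disjoint, so $S_1^{+}$ is disjoint from $\{v_{k+1}\}\cup T_1^{+}\cup R_1^{-}$, the sets $T_1^{+}$ and $R_1^{-}$ are disjoint, and $v_{k+1}\notin T_1^{+}$; since also $T_1\cap R_1=\emptyset$, we get $|\{v_{k+1}\}\cup T_1^{+}\cup R_1^{-}|\ge|T_1^{+}|+|R_1^{-}|=|T_1|+|R_1|=|T_1\cup R_1|\ge t$. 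Choosing any $t$-subset $B$ of this set together with $A=S_1^{+}$ gives disjoint sets of sizes $s$ and $t$ with no edge between them, the desired contradiction; hence $|T_1\cup R_1|\le t-1$.

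The part I expect to cost the most care is making (I) and (II) airtight at the positions $v_k$, $v_{k+1}$, $v_1$, $v_n$, where the successor/predecessor operations behave specially: for instance one should note that $v_{k-1}\in T_1$ forces $r\le k-2$ (so in (I) one always has $v_a^{+}\ne v_k$), that $v_{k-1}\in S_1$ forces $r=k-1$ and hence $T_1=\emptyset$ (so the term $v_k$ never enters $S_1^{+}$ when $T_1\ne\emptyset$), and that the good-end paths of (II) remain genuine Hamilton paths of the respective induced subgraphs in the degenerate cases $v_a=v_{k-1}$, $v_b=v_{k+2}$, $v_b=v_n$, $v_a=v_1$. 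These amount to a short finite list of routine verifications; once they are in place, the counting above finishes the proof.
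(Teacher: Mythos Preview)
Your proof is correct and follows essentially the same route as the paper's: assume $|T_1\cup R_1|\ge t$, observe that $S_1^{+}$ and $T_1^{+}\cup R_1^{-}$ are disjoint sets of sizes $s$ and at least $t$, and show that any edge between them yields a Hamilton $(x,y)$-path in $G$ via exactly the two reroutings you describe in (I) and (II). The only differences are cosmetic: the paper invokes $\widetilde\alpha(G)$ first and then derives the forbidden path, whereas you first exclude all edges and then invoke $\widetilde\alpha(G)$; and your inclusion of $v_{k+1}$ in $B$ is harmless but unnecessary, since $|T_1^{+}\cup R_1^{-}|=|T_1|+|R_1|\ge t$ already.
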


\begin{proof}[Proof of Claim~\ref{T1R1}] Suppose, to the contrary, that $|T_1\cup R_1|\ge t$. Then $|T_1^+\cup R_1^-|\ge t$, and by $\widetilde{\alpha}(G)=s+t-1$, we have
	\begin{equation*}
		[S_1^+,T_1^+\cup R_1^-]\neq \emptyset.
	\end{equation*}
	If $[S_1^+,T_1^+]\neq \emptyset$, then there exist $v_j\in S_1$ and $v_{j'}\in T_1$ such that $v_j^+$ is adjacent to $v_{j'}^+$. Then
	\[v_1,\overrightarrow{P}[v_1,v_j],v_j,v_k,\overleftarrow{P}[v_k,v_{j'}^+],v_{j'}^+,v_j^+,\overrightarrow{P}[v_j^+,v_{j'}],v_{j'},v_{k+1},\overrightarrow{P}[v_{k+1},v_n],v_n\]
	is a Hamilton $(x,y)$-path in $G$, a contradiction (see Fig. \ref{fig1}(a)).
	
	\medskip
	
	If $[S_1^+,R_1^-]\neq \emptyset$, then there exist $v_j\in S_1$ and $v_{j'}\in R_1$ such that $v_j^+$ is adjacent to $v_{j'}^-$. Hence
	\[v_1,\overrightarrow{P}[v_1,v_j],v_j,v_k,\overleftarrow{P}[v_k,v_j^+],v_j^+,v_{j'}^-,\overleftarrow{P}[v_{j'}^-,v_{k+1}],v_{k+1},v_{j'},\overrightarrow{P}[v_{j'},v_n],v_n\]
	is a Hamilton $(x,y)$-path in $G$, a contradiction (see Fig. \ref{fig1}(b)). This proves Claim~\ref{T1R1}.
	
	\end{proof}

Let $S_2=N_G(v_k)\cap \{v_i: r+1\leq i\leq k-1\}$, $U_2=N_G(v_k)\cap \{v_i: k+2\leq i\leq n\}$ and $T_2=N_G(v_{k+1})\cap \{v_j: 2\leq j\leq r\}$. Since $d_G(v_{k+1})\ge d_G(v_k)\ge \widetilde\alpha(G)+1$, by Claim~\ref{T1R1}, we have
\begin{equation*}
	|T_2|=d_G(v_{k+1})-|T_1\cup R_1|\ge \widetilde\alpha(G)+1-(t-1)=s+1>s.
\end{equation*}

\begin{claim}\label{S2Z2}
	$|S_2\cup U_2|\le t-1$.
\end{claim}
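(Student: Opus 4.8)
The plan is to prove Claim~\ref{S2Z2} in close parallel with Claim~\ref{T1R1}, exploiting the symmetry between the two endpoints $v_k$ and $v_{k+1}$ of the added edge $e$. First I would assume for contradiction that $|S_2 \cup U_2| \ge t$. The sets $S_2 \subseteq \{v_{r+1},\dots,v_{k-1}\}$ and $U_2 \subseteq \{v_{k+2},\dots,v_n\}$ collect the neighbours of $v_k$ on $P$ that lie outside the initial segment $\{v_1,\dots,v_r\}$; shifting $S_2$ forward and $U_2$ backward along $P$ produces a set $S_2^+ \cup U_2^-$ of size $\ge t$ (the shifts stay inside $V(P)$ and, as in Claim~\ref{T1R1}, the only collisions to rule out involve $v_k$ and $v_{k+1}$, which are handled exactly as there). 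On the other side, I would use the neighbours of $v_{k+1}$ in the initial segment, namely $T_2 \subseteq \{v_2,\dots,v_r\}$, with $|T_2| \ge s+1 > s$, and shift them forward to get $T_2^+$. So I would have a set of size $\ge s$ and a set of size $\ge t$, and since $\widetilde\alpha(G) = s+t-1$, there must be an edge between $S_2^+ \cup U_2^-$ and $T_2^+$ — unless the shifts cause one set to shrink, which is the point that needs care.

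The core of the argument is then a case analysis on where this edge lands, mirroring the two subcases of Claim~\ref{T1R1}. If the edge is $[S_2^+, T_2^+] \ne \emptyset$, say $v_j^+$ adjacent to $v_{j'}^+$ with $v_j \in S_2$ (so $r+1 \le j \le k-1$) and $v_{j'} \in T_2$ (so $2 \le j' \le r$), I would reroute $P$ by splicing: travel $v_1 \overrightarrow{P} v_{j'}$, jump to $v_{k+1}$, come back along $\overleftarrow{P}$ to $v_j^+$, jump to $v_{j'}^+$, run forward to $v_j$, jump to $v_k$, and continue along $\overrightarrow{P}$ to $v_n$ — using the edge $v_{j'}v_{k+1}$, the original edge $v_j v_k$, and the new chord $v_j^+ v_{j'}^+$, while avoiding the non-edge $v_k v_{k+1}$. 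This yields a Hamilton $(x,y)$-path in $G$, contradicting the choice of $x,y$. If instead $[U_2^-, T_2^+] \ne \emptyset$, with $v_j^- $ adjacent to $v_{j'}^+$ for $v_j \in U_2$ ($k+2 \le j \le n$) and $v_{j'} \in T_2$ ($2 \le j' \le r$), a similar splice — using $v_{j'}v_{k+1}$, $v_j v_k$, and $v_j^- v_{j'}^+$ — again produces a Hamilton $(x,y)$-path in $G$, a contradiction. In both cases the resulting path is drawn in a figure companion to Fig.~\ref{fig1}.

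The step I expect to be the main obstacle is verifying that the shifted sets genuinely have the claimed sizes and are disjoint as needed, i.e.\ that no boundary vertex is lost or double-counted when we apply the $(\cdot)^+$ and $(\cdot)^-$ operations near the junction $v_k v_{k+1}$ and near the path ends $v_1, v_n$. Concretely one must check that $v_k \notin S_2^+$, that $v_{k+1} \notin U_2^-$, that $v_1$ does not spuriously enter $T_2^+$, and that $S_2^+$ and $U_2^-$ do not overlap (they live in disjoint segments, so this is immediate, but the endpoints of those segments need attention); whenever a shift would push a vertex out of range, this is exactly the configuration that yields a length-$n$ path or cycle directly, so it is absorbed into the contradiction rather than being a genuine loss. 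Once these bookkeeping points are settled, the pigeonhole application of $\widetilde\alpha(G) = s+t-1$ is automatic and the rerouting arguments are the same two templates used in Claim~\ref{T1R1}, so the proof goes through. After establishing Claim~\ref{S2Z2}, the final contradiction will come from combining it with $|S_1| = s$ to bound $d_G(v_k) = |S_1| + |S_2 \cup U_2| \le s + (t-1) = \widetilde\alpha(G) < \widetilde\alpha(G)+1$, contradicting $d_G(v_k) \ge \widetilde\alpha(G)+1$; Case~\ref{subcase1.1} is then complete, and Case~2 ($k+2 \le r \le n$) is symmetric under reversing the orientation of $P$.
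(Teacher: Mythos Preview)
Your overall strategy is correct and matches the paper's, but there is a genuine gap: you shift $T_2$ in the wrong direction. The paper uses $T_2^-$, not $T_2^+$, and this choice is essential in the subcase $[S_2^+,\,\cdot\,]\neq\emptyset$.

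Concretely, look at your proposed reroute when $v_j^+ v_{j'}^+$ is an edge with $v_j\in S_2$ and $v_{j'}\in T_2$. You write ``jump to $v_{k+1}$, come back along $\overleftarrow{P}$ to $v_j^+$'': since $j\le k-1$, the segment $\overleftarrow{P}[v_{k+1},v_j^+]$ passes through the non-edge $v_{k+1}v_k$, so this walk does not exist in $G$. In fact, with only the guaranteed edges $v_{j'}v_{k+1}$, $v_jv_k$, $v_j^+v_{j'}^+$ and the path edges, there is \emph{no} Hamilton $(v_1,v_n)$-path at all: the block $[v_{j'}^+,v_k]$ has a single attachment to the rest of the graph (through $v_{j'}^+v_{j'}$), since $v_kv_{k+1}\notin E(G)$, so it cannot be both entered and exited. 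Thus this subcase does not yield a contradiction, and the proof breaks down. (Your second subcase $[U_2^-,T_2^+]\neq\emptyset$ happens to work, but that is not enough.)

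The fix is exactly what the paper does: shift $T_2$ \emph{backward} to $T_2^-\subseteq\{v_1,\dots,v_{r-1}\}$, which is still disjoint from $S_2^+\cup U_2^-$ and still has size $\ge s$. Then an edge $v_j^+v_{j'}^-$ (with $v_j\in S_2$, $v_{j'}\in T_2$) places the chord between $[v_1,v_{j'}^-]$ and $[v_j^+,v_k]$, while $v_{j'}$ now sits at the \emph{left} end of the middle segment $[v_{j'},v_j]$ and connects to $v_{k+1}$; this gives the two independent attachments needed, and the path
\[
v_1\,\overrightarrow{P}\,v_{j'}^-,\ v_j^+\,\overrightarrow{P}\,v_k,\ v_j\,\overleftarrow{P}\,v_{j'},\ v_{k+1}\,\overrightarrow{P}\,v_n
\]
is a Hamilton $(x,y)$-path in $G$. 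The bookkeeping worries you flag about shifted sets losing vertices are not the real obstacle; the direction of the shift is.
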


\begin{proof}[Proof of Claim~\ref{S2Z2}] Assume $|S_2\cup U_2|\ge t$. That is, $|S_2^+\cup U_2^-|\ge t$; and by $\widetilde{\alpha}(G)=s+t-1$, we have
	\begin{equation*}
		[S_2^+\cup U_2^-,T_2^-]\neq \emptyset.
	\end{equation*}
	By applying the same argument presented in Claim~\ref{T1R1}. If $[S_2^+, T_2^-]\neq \emptyset$, then there exist $v_j\in S_2$ and $v_{j'}\in T_2$ such that $v_j^+$ is adjacent to $v_{j'}^-$. But now
	\[v_1,\overrightarrow{P}[v_1,v_{j'}^-],v_{j'}^-,v_{j}^+,\overrightarrow{P}[v_j^+,v_k],v_k,v_j,\overleftarrow{P}[v_{j},v_{j'}],v_{j'},v_{k+1},\overrightarrow{P}[v_{k+1},v_n],v_n\]
	is a Hamilton $(x,y)$-path in $G$, a contradiction (see Fig. \ref{fig1}(c)).
	
	\medskip
	If $[U_2^-, T_2^-]\neq \emptyset$, then there exist $v_j\in U_2$ and $v_{j'}\in T_2$ such that $v_j^-$ is adjacent to $v_{j'}^-$. It follows that
	\[v_1,\overrightarrow{P}[v_1,v_{j'}^-],v_{j'}^-,v_{j}^-,\overleftarrow{P}[v_j^-,v_{k+1}],v_{k+1},v_{j'},\overrightarrow{P}[v_{j'},v_k],v_k,v_j,\overrightarrow{P}[v_j,v_n],v_n\]
	is a Hamilton $(x,y)$-path in $G$, a contradiction (see Fig. \ref{fig1}(d)). Hence $|S_2\cup U_2|\le t-1.$ This proves Claim~\ref{S2Z2}.\end{proof}
\medskip

By Claim~\ref{S2Z2}, we have
\begin{equation*}\label{eq3}
	d_G(v_k)=|S_1|+|S_2\cup U_2|\le t-1+s=\widetilde\alpha(G),
\end{equation*}
which is a contradiction.
\begin{figure}[H]
	\begin{center}
		\includegraphics[width=0.9\linewidth]{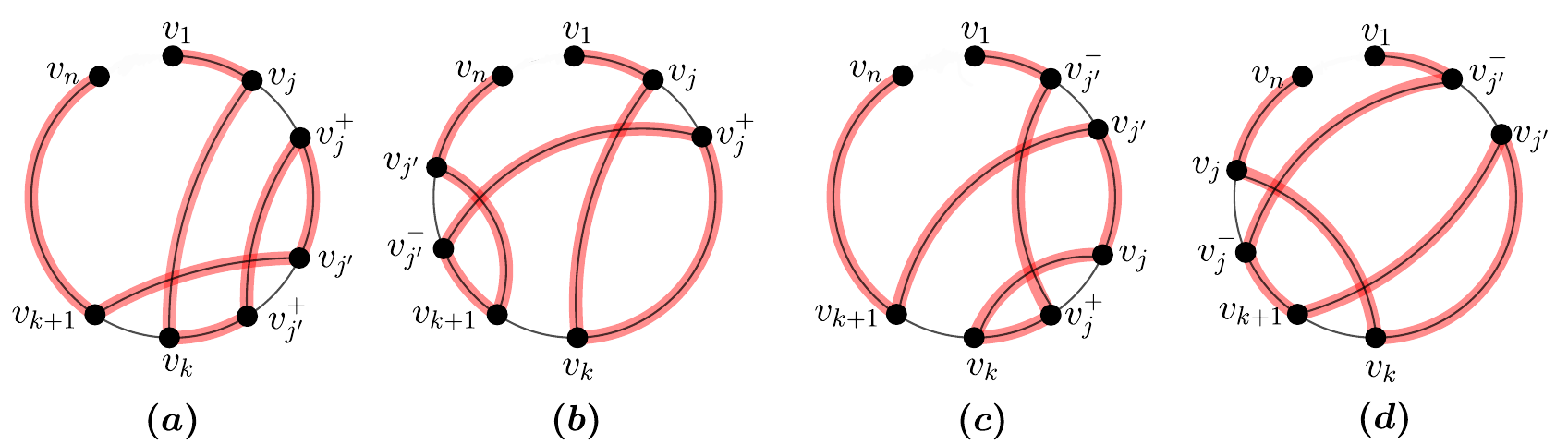}
			\end{center}
		\caption{Illustration of the configurations in Case \ref{subcase1.1}.}
		\label{fig1}
	\end{figure}
	\begin{figure}[H]
	\begin{center}
		\includegraphics[width=0.9\linewidth]{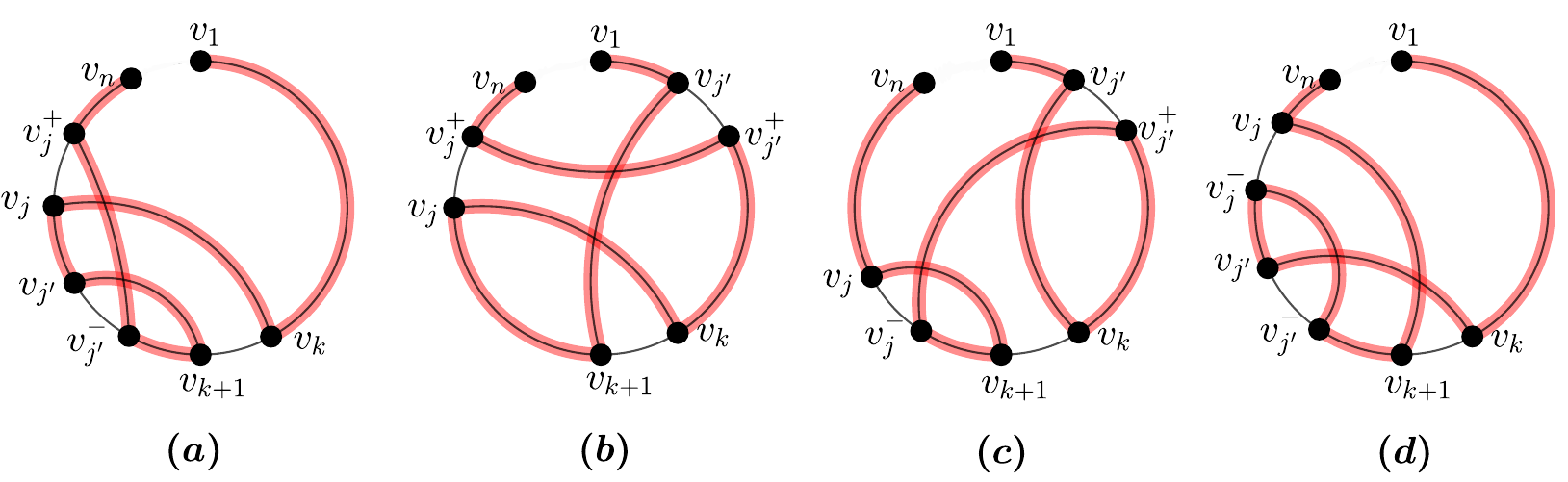}
	\end{center}
		\caption{Illustration of the configurations in Case \ref{subcase1.2}.}
		\label{fig2}
	\end{figure}
Now we consider the other case.

\begin{case}\label{subcase1.2}
	$k+2\leq r\leq n$.
\end{case}

Denote $S_3=N_G(v_k)\cap \{v_i: r\leq i\leq n\}$. Since $|S_1|=s$ and $v_k$ is adjacent to $v_r$, we have
\begin{equation}\label{eq4}
	|S_3|=d_G(v_k)-|S_1|+1\ge \widetilde\alpha(G)+1-s+1=t+1.
\end{equation}

Then there exists an integer $r \leq r'\leq n$ such that $|N_G(v_k)\cap \{v_i: r'\leq i\leq n\}|=s+1$, and we choose $r'$ to be the maximum possible value. It implies that $v_k$ is adjacent to $v_{r'}$. Let $U_3=N_G(v_k)\cap \{v_i: r'\leq i\leq n-1\}$, $T_3=N_G(v_{k+1})\cap \{v_j: k+2\leq j\leq r'-1\}$ and $R_3=N_G(v_{k+1})\cap \{v_j: 1\leq j\leq k-1\}$.

\begin{claim}\label{T3R3}
	$|T_3\cup R_3|\le t-1$.
\end{claim}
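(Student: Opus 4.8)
The plan is to argue by contradiction, exactly in the spirit of the proofs of Claims~\ref{T1R1} and~\ref{S2Z2}: assuming $|T_3\cup R_3|\ge t$, I would build from $P$ a Hamilton $(x,y)$-path of $G$ that avoids $e=v_kv_{k+1}$, contradicting the choice of $x$ and $y$.

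The first step is to choose the two sets on which to invoke the bipartite-independence hypothesis. Since $v_{r'}\in N_G(v_k)$ and $|N_G(v_k)\cap\{v_i:r'\le i\le n\}|=s+1$, we have $|U_3|\ge s$, and since every vertex of $U_3$ has index in $[r',n-1]$ the shift loses nothing: $U_3^{+}\subseteq\{v_i:r'+1\le i\le n\}$ with $|U_3^{+}|=|U_3|\ge s$. On the other side $R_3\subseteq\{v_1,\dots,v_{k-1}\}$ and $T_3\subseteq\{v_{k+2},\dots,v_{r'-1}\}$ are disjoint and avoid $v_1$ on the low end, so $R_3\mapsto R_3^{+}$ and $T_3\mapsto T_3^{-}$ also lose nothing, giving $W:=R_3^{+}\cup T_3^{-}$ with $|W|=|R_3\cup T_3|\ge t$ and $W\subseteq\{v_2,\dots,v_{r'-2}\}$. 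Then $W\cap U_3^{+}=\emptyset$, and since $s+t=\widetilde{\alpha}(G)+1$ the definition of $\widetilde{\alpha}(G)$ produces an edge of $G$ between $U_3^{+}$ and $W$.

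The second step is a two-case analysis on which part of $W$ this forced edge meets. If it is $v_{j+1}v_{j'+1}$ with $v_j\in U_3$ and $v_{j'}\in R_3$ (so $v_j\sim v_k$, $v_{j'}\sim v_{k+1}$, $j'\le k-1$), then
\[v_1,\overrightarrow{P}[v_1,v_{j'}],v_{j'},v_{k+1},\overrightarrow{P}[v_{k+1},v_j],v_j,v_k,\overleftarrow{P}[v_k,v_{j'+1}],v_{j'+1},v_{j+1},\overrightarrow{P}[v_{j+1},v_n],v_n\]
is a Hamilton $(x,y)$-path of $G$; if it is $v_{j+1}v_{j'-1}$ with $v_j\in U_3$ and $v_{j'}\in T_3$ (so $v_j\sim v_k$, $v_{j'}\sim v_{k+1}$, $k+2\le j'\le r'-1$), then
\[v_1,\overrightarrow{P}[v_1,v_k],v_k,v_j,\overleftarrow{P}[v_j,v_{j'}],v_{j'},v_{k+1},\overrightarrow{P}[v_{k+1},v_{j'-1}],v_{j'-1},v_{j+1},\overrightarrow{P}[v_{j+1},v_n],v_n\]
is such a path as well. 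In each case the sequence is assembled from segments of $P$ together with the edges $v_kv_j$, $v_{k+1}v_{j'}$ and the forced edge, it never traverses $e$, and reading off the index ranges shows that the constituent segments are pairwise vertex-disjoint and cover $V(G)$. This contradiction yields $|T_3\cup R_3|\le t-1$; the two configurations are those intended for Fig.~\ref{fig2}.

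I expect the only real work to lie in two routine-but-fiddly verifications. First, that the chosen shift directions simultaneously keep $|U_3^{+}|\ge s$, $|W|\ge t$ and $U_3^{+}\cap W=\emptyset$ — this is exactly what forces $U_3$ and $R_3$ to be shifted up while $T_3$ is shifted down, and it requires a glance at the degenerate configurations $T_3=\emptyset$ (when $r'=k+2$), $j'+1=k$, and $j+1=n$, in which one or more of the displayed subpaths collapses to a single vertex. Second, that each displayed sequence really is a spanning path; the delicate point is that the forced edge $v_{j+1}v_{j'\pm1}$ must splice the tail $\{v_{j+1},\dots,v_n\}$ onto the very end of the rerouted path rather than leaving it stranded, and the wrong choice of which endpoint of the forced edge to shift breaks precisely this — which is why the argument is a shade more delicate than the one for Claim~\ref{T1R1}.
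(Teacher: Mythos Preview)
Your proof is correct and follows the paper's argument essentially verbatim: the same shifts $U_3^{+}$, $R_3^{+}$, $T_3^{-}$, the same disjointness check, and the two Hamilton $(x,y)$-paths you display are exactly those given in the paper (with $v_{j+1},v_{j'\pm1}$ in place of the paper's $v_j^{+},v_{j'}^{\pm}$, and the two cases in the opposite order). One small wording slip: $R_3$ need not ``avoid $v_1$'' --- $v_1$ may well lie in $R_3$ --- but this is irrelevant since the shift $R_3\mapsto R_3^{+}$ is still injective on $\{v_1,\dots,v_{k-1}\}$, so your cardinality count is unaffected.
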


\begin{proof} [Proof of Claim~\ref{T3R3}] Suppose, to the contrary, that $|T_3\cup R_3|\ge t$. Then $|T_3^-\cup R_3^+|\ge t$. Recall that $|U_3|=|U_3^+|\ge s$, by $\widetilde{\alpha}(G)=s+t-1$, we have
	\begin{equation}\label{eq5}
		[U_3^+, T_3^-\cup R_3^+]\neq \emptyset.
	\end{equation}
	If $[U_3^+,T_3^-]\neq\emptyset$, then there exist $v_j\in U_3$ and $v_{j'}\in T_3$ such that $v_j^+$ is adjacent to $v_{j'}^-$. But now
	\[v_1,\overrightarrow{P}[v_1,v_k],v_k,v_j,\overleftarrow{P}[v_j,v_{j'}],v_{j'},v_{k+1},\overrightarrow{P}[v_{k+1},v_{j'}^-],v_{j'}^-,v_j^+,\overrightarrow{P}[v_j^+,v_n],v_n\]
	is a Hamilton $(x,y)$-path in $G$, a contradiction (see Fig. \ref{fig2}(a)).
	\medskip
	
	If $[U_3^+,R_3^+]\neq\emptyset$, then there exist $v_j\in U_3$ and $v_{j'}\in R_3$ such that $v_j^+$ is adjacent to $v_{j'}^+$. Then
	\[v_1,\overrightarrow{P}[v_1,v_{j'}],v_{j'},v_{k+1},\overrightarrow{P}[v_{k+1},v_j],v_j,v_k,\overleftarrow{P}[v_k,v_{j'}^+],v_{j'}^+,v_j^+,\overrightarrow{P}[v_j^+,v_n],v_n\]
	is a Hamilton $(x,y)$-path in $G$, a contradiction (see Fig. \ref{fig2}(b)). Hence we have $|T_3\cup R_3|\le t-1$. This proves Claim~\ref{T3R3}.\end{proof}

Now, we denote $T_4=N_G(v_{k+1})\cap \{v_j: r'\leq j\leq n\}$. Recall that $d_G(v_{k+1})\ge d_G(v_k)\ge \widetilde\alpha(G)+1$, by Claim~\ref{T3R3} and $\widetilde{\alpha}(G)=s+t-1$, we have
\begin{equation*}\label{eq6}
	|T_4|=d_G(v_{k+1})-|T_3\cup R_3|\ge \widetilde\alpha(G)+1-(t-1)=s+1.
\end{equation*}
It follows that there exists an integer $r'\leq r''\leq n$ such that $|N_G(v_{k+1})\cap \{v_j: r''\leq j\leq n\}|=s$, and we choose $r''$ to be maximum. Then $v_{k+1}$ is adjacent to $v_{r''}$.

Let $R_4=N_G(v_{k+1})\cap \{v_j: r''\leq j\leq n\}$, $S_4=N_G(v_k)\cap \{v_i: 1\leq i\leq k-1]\}$ and $U_4=N_G(v_k)\cap \{v_i:k+2\leq i\leq r'\}$.

\begin{claim}\label{S4U4}
	$|S_4\cup U_4|\le t-1$.
\end{claim}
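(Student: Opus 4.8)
The plan is to argue by contradiction, in exactly the style of the proofs of Claims~\ref{T1R1}--\ref{T3R3}. Assume $|S_4\cup U_4|\ge t$. I would extract from the defining property of the bipartite independence number a ``crossing'' edge of $G$ joining a suitable shift of $S_4\cup U_4$ to a suitable shift of $R_4$, and then use that edge together with one edge at $v_k$ (to the relevant vertex of $S_4\cup U_4$) and one edge at $v_{k+1}$ (to the relevant vertex of $R_4$) to reassemble the segments of $P$ into a Hamilton $(x,y)$-path of $G$ that never uses the edge $e=v_kv_{k+1}$, contradicting the non-existence of a Hamilton $(x,y)$-path in $G$. The shifts I would use are $S_4^+\cup U_4^-$ on the ``$\ge t$'' side and $R_4^-$ on the ``$=s$'' side.

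For the bookkeeping: since $S_4\subseteq\{v_1,\dots,v_{k-1}\}$ and $U_4\subseteq\{v_{k+2},\dots,v_{r'}\}$ are disjoint and contain no vertex equal to $v_n$, resp.\ $v_1$, and $R_4\subseteq\{v_{r''},\dots,v_n\}$ contains no vertex equal to $v_1$, one gets $|S_4^+\cup U_4^-|=|S_4|+|U_4|=|S_4\cup U_4|\ge t$ and $|R_4^-|=|R_4|=s$. For disjointness, $S_4^+\subseteq\{v_2,\dots,v_k\}$ and $U_4^-\subseteq\{v_{k+1},\dots,v_{r'-1}\}$ give $S_4^+\cup U_4^-\subseteq\{v_2,\dots,v_{r'-1}\}$, while $R_4^-\subseteq\{v_{r''-1},\dots,v_{n-1}\}$; these are disjoint once $r''-1\ge r'$. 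This is precisely where the choice of $r''$ enters: since $r''$ is the \emph{largest} index in $[r',n]$ with $|N_G(v_{k+1})\cap\{v_j:r''\le j\le n\}|=s$, whereas at $j=r'$ that quantity equals $|T_4|\ge s+1>s$, we must have $r''>r'$. Hence $R_4^-$ and $S_4^+\cup U_4^-$ are disjoint of sizes $s$ and $\ge t$, and the defining property of $\widetilde\alpha(G)$ for the chosen pair $s,t$ (recall $\widetilde\alpha(G)=s+t-1$) yields an edge of $G$ with one end in $R_4^-$ and the other in $S_4^+$ or in $U_4^-$.

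It then remains to reroute in each case. If the edge found is $v_a^+v_b^-$ with $v_a\in S_4$ and $v_b\in R_4$, I would take
\[v_1,\overrightarrow{P}[v_1,v_a],v_a,v_k,\overleftarrow{P}[v_k,v_a^+],v_a^+,v_b^-,\overleftarrow{P}[v_b^-,v_{k+1}],v_{k+1},v_b,\overrightarrow{P}[v_b,v_n],v_n;\]
if it is $v_c^-v_b^-$ with $v_c\in U_4$ and $v_b\in R_4$, I would take
\[v_1,\overrightarrow{P}[v_1,v_k],v_k,v_c,\overrightarrow{P}[v_c,v_b^-],v_b^-,v_c^-,\overleftarrow{P}[v_c^-,v_{k+1}],v_{k+1},v_b,\overrightarrow{P}[v_b,v_n],v_n.\]
In each case one checks that every vertex of $G$ is visited exactly once and that only edges of $G$, never $e$, are used: the inequality $r''>r'\ge r\ge k+2$ (so $b\ge r''\ge k+3$ and $k+2\le c\le r'<b$) places $v_b^-$ strictly to the right of $v_{k+1}$ and $v_c$ strictly between $v_{k+1}$ and $v_b$, so the four pieces — $P[v_1,v_k]$ (or $P[v_1,v_a]$), the two reversed middle segments, and $P[v_b,v_n]$ — partition $V(G)$. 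Either outcome contradicts the non-existence of a Hamilton $(x,y)$-path in $G$, so $|S_4\cup U_4|\le t-1$.

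The step I expect to be the main obstacle is the selection of the shifts: among the combinations $S_4^{\pm},U_4^{\pm}$ against $R_4^{\pm}$, only $S_4^+,U_4^-,R_4^-$ simultaneously keeps the two shifted sets provably disjoint (this is exactly what forces $r''>r'$ into the argument) and makes both reroutings close up into a single $(x,y)$-path; verifying the two reroutings — including the degenerate positions $a=k-1$ and $c=k+2$, where a shifted vertex coincides with $v_k$ or $v_{k+1}$ and some subpath collapses to a single vertex — is where most of the care lies.
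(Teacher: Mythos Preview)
Your argument is correct and follows the paper's proof essentially verbatim: the same shifts $S_4^+\cup U_4^-$ versus $R_4^-$, the same appeal to $\widetilde\alpha(G)=s+t-1$, and the same two reroutings. Your additional verification that $r''>r'$ (hence the two shifted sets are disjoint) is a detail the paper leaves implicit, but it is correct and the rest matches exactly.
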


\begin{proof}[Proof of Claim~\ref{S4U4}] Suppose that $|S_4\cup U_4|\ge t$. That is, $|S_4^+\cup U_4^-|\ge t$. Recall that $|R_4^-|=|R_4|=s$. By $\widetilde{\alpha}(G)=s+t-1$, we have
	\begin{equation*}\label{eq7}
		[R_4^-,S_4^+\cup U_4^-]\neq \emptyset.
	\end{equation*}
	
	If $[R_4^-, S_4^+]\neq \emptyset$, then there exist $v_j\in R_4$ and $v_{j'}\in S_4$ such that $v_j^-$ is adjacent to $v_{j'}^+$. It follows that
	\[v_1,\overrightarrow{P}[v_1,v_{j'}],v_{j'},v_{k},\overleftarrow{P}[v_{k},v_{j'}^+],v_{j'}^+,v_j^-,\overleftarrow{P}[v_j^-,v_{k+1}],v_{k+1},v_j,\overrightarrow{P}[v_j,v_n],v_n\]
	is a Hamilton $(x,y)$-path in $G$, a contradiction (see Fig. \ref{fig2}(c)).
	
	\medskip
	
	If $[R_4^-,U_4^-]\neq\emptyset$, then there exist $v_j\in R_4$ and $v_{j'}\in U_4$ such that $v_j^-$ is adjacent to $v_{j'}^-$. Hence
	\[v_1,\overrightarrow{P}[v_1,v_k],v_k,v_{j'},\overrightarrow{P}[v_{j'},v_j^-],v_j^-,v_{j'}^-,\overleftarrow{P}[v_{j'}^-,v_{k+1}],v_{k+1},v_j,\overrightarrow{P}[v_j,v_n],v_n\]
	is a Hamilton $(x,y)$-path in $G$, a contradiction (see Fig. \ref{fig2}(d)). Therefore, $|S_4\cup U_4|\le t-1$. This proves Claim~\ref{S4U4}.\end{proof}

According to Claim~\ref{S4U4}, we have
\begin{equation*}
	d_G(v_k)=|S_4\cup U_4\cup (N_G(v_k)\cap \{v_i:  r'\leq i\leq n\})|-1\le t-1+s+1-1=\widetilde\alpha(G).
\end{equation*}
This contradicts the fact that $d_G(v_k)\ge \widetilde\alpha(G)+1$.  Thus, we have completed the proof of Theorem~\ref{hamiltonian-connected theorem}.
\end{proof}

\section*{Acknowledgment}
We would like to thank  Kun Cheng for drawing our attention to an error in the preliminary version. We also thankful to Dr.  Feng Liu for the inspiring discussions on this subject.

\section*{Declarations}
\noindent$\textbf{Conflict~of~interest}$
The authors declare that they have no known competing financial interests or personal relationships that could have appeared to influence the work reported in this paper.

\noindent$\textbf{Data~availability}$
Data sharing not applicable to this paper as no datasets were generated or analysed during the current study.

\end{document}